\DeclareTextSymbol{\thh}{T1}{254}
\def\th{\textnormal{\thh}}
\newtheorem{thm}{Theorem}[section]
\newtheorem{lemma}[thm]{Lemma}
\newtheorem{prop}[thm]{Proposition}
\newtheorem{cor}[thm]{Corollary}
\theoremstyle{definition}
\newtheorem{rmk}[thm]{Remark}
\newtheorem{question}[thm]{Question}
\newcommand{\Z}{\mathbb{Z}}
\def\indsym#1#2{%
  \setbox0=\hbox{$\m@th#1x$}%
  \kern\wd0%
  \hbox to 0pt{\hss$\m@th#1\mid$\hbox to 0pt{$\m@th#1^{#2}$}\hss}%
  \lower.9\ht0\hbox to 0pt{\hss$\m@th#1\smile$\hss}%
  \kern\wd0}
\def\nindsym#1#2{%
  \setbox0=\hbox{$\m@th#1x$}%
  \kern\wd0%
  \hbox to 0pt{\hss$\m@th#1\not$\kern1.4\wd0\hss}
  \hbox to 0pt{\hss$\m@th#1\mid$\hbox to 0pt{$\m@th#1^{\,#2}$}\hss}%
  \lower.9\ht0\hbox to 0pt{\hss$\m@th#1\smile$\hss}%
  \kern\wd0}
\def\dotminussym#1#2{%
  \setbox0=\hbox{$\m@th#1-$}%
  \kern.5\wd0%
  \hbox to 0pt{\hss\hbox{$\m@th#1-$}\hss}%
  \raise.6\ht0\hbox to 0pt{\hss$\m@th#1.$\hss}%
  \kern.5\wd0}
\newcommand{\dotminus}{\mathbin{\mathpalette\dotminussym{}}}
\def \r { {\mathbb R} }
\def \<{\langle}
\def \>{\rangle}
\def \*Z {{{^*}\Z}}
\def \((  {(\!(}
\def \)) {)\!)}
\def \acl{\operatorname{acl}}
\def \dcl{\operatorname{dcl}}
\def \int{\operatorname{int}}
\numberwithin{equation}{section}
\def \acl{\operatorname{acl}}
\def \u{\mathfrak{U}}
\def \UU{\mathbb{U}}
\def \iso{\operatorname{Iso}}
\def \id{\operatorname{id}}
\def \co{\colon}
\begin{document}

\title{Definable Functions in Urysohn's Metric Space}

\author{Isaac Goldbring}

\address {University of California, Los Angeles, Department of Mathematics, 520 Portola Plaza, Box 951555, Los Angeles, CA 90095-1555, USA}
\email{isaac@math.ucla.edu}
\urladdr{www.math.ucla.edu/~isaac}

\begin{abstract}
Let $\u$ denote the Urysohn sphere and consider $\u$ as a metric structure in the empty continuous signature.  We prove that every definable function $\u^n\to\u$ is either a projection function or else has relatively compact range.  As a consequence, we prove that many functions natural to the study of the Urysohn sphere are not definable.  We end with further topological information on the range of the definable function in case it is compact.
\end{abstract}

\maketitle

\section{Introduction}

It is a common goal in model theory to understand the functions which are definable in a given structure, either by giving an explicit characterization of the definable functions in terms of non-logical verbiage or by proving structure theorems for definable functions.  An example of this phenomenon is the characterization of the functions definable in algebraically closed fields as exactly those functions whose graphs are constructible (i.e. Boolean combinations of Zariski closed sets).  Moreover, if the field is of characteristic $0$, then such a function is piecewise given by rational functions.

Model theory for metric structures is a recent generalization of classical model theory which is better suited for understanding structures based on complete metric spaces.  (For an introduction to the model theory of metric structures, see the wonderful survey \cite{BBHU}.)  As in classical model theory, there is an appropriate notion of definability of functions in metric structures.  However, there has yet to appear an analysis of the functions definable in a particular metric structure.  The goal of the present paper is to fill this void by studying the functions definable in the \emph{Urysohn sphere}.

The Urysohn sphere $\u$ is the unique (up to isometry) Polish metric space (i.e. complete, separable metric space) of diameter $1$ which is \emph{universal}, in the sense that every Polish metric space of diameter at most $1$ admits an isometric embedding into $\u$, and \emph{ultrahomogeneous}, in the sense that every isometry between finite subsets of $\u$ extends to an isometry of $\u$.  The Urysohn \emph{space}, which is the same as the Urysohn sphere without the bounded diameter requirement, as well as its isometry group, have been of interest to descriptive set theorists for a plethora of reasons; see, for example, \cite{Julien}.  

More recently, model theoretic aspects of the Urysohn sphere (as a metric structure) have been studied by Henson, Usvyatsov (see \cite{Usvy}) and Ealy and Goldbring (see \cite{Gold}).  The present paper intends to add to the understanding of the model theory of the Urysohn sphere by proving that definable functions $\u^n\to \u$ are either projection functions or else have a relatively compact range of a very special nature.  This structure theorem is the content of Section 2 below.  In Section 3, we study some consequences of our structure theorem.  In particular, we prove that many of the functions which appear naturally in the geometry of the Urysohn sphere are not definable.  We also show that there are no definable group operations on $\u$.  Finally, in Section 4, we prove some topological properties of the range of a definable function in case it is relatively compact.

I would like to thank Julien Melleray for many helpful discussions concerning the geometry of the Urysohn space.

\

\noindent \textbf{Notations and Background}

\
  
In the remainder of this introduction we establish some conventions as well as collect facts about definable functions that we will need for the main results.  

Fix a metric space $M$.  For $A\subseteq M$, we let $\bar{A}$ (resp. $\int(A)$) denote the closure (resp. interior) of $A$ in $M$.  For $a\in M$ and $r\in \r^{>0}$, we let $B_M(a;r)$ (resp. $\bar{B}_M(a;r)$) denote the open (resp. closed) ball in $M$ centered at $a$ of radius $r$.  We let $\id_M$ denote the identity function $M\to M$.

We let $\u$ denote the Urysohn sphere and consider it as a structure in the empty \emph{bounded} metric signature, that is the bounded metric signature consisting only of the metric symbol $d$, which is assumed to satisfy $d\leq 1$.  It is known that $\operatorname{Th}(\u)$ admits quantifier elimination (see \cite{Usvy}).  We let $\UU$ denote an $\omega_1$-universal domain for $\operatorname{Th}(\u)$ and we view $\u$ as an elementary substructure of $\UU$.  Note that $\u$ is closed in $\UU$ since $\u$ is complete.  Also note that $\omega_1$-saturation implies that any closed ball in $\UU$ (including $\UU$ itself) is not separable.  We equip $\UU^n$ with the maximum metric, that is, for $a=(a_1,\ldots,a_n),b=(b_1,\ldots,b_n)\in \UU^n$, we have
$$d(a,b)=\max_{1\leq i\leq n} d(a_i,b_i).$$ For all $i\in \{1,\ldots,n\}$, we let $\pi_i:\UU^n\to\UU$ denote the projection map onto the $i^{\text{th}}$ coordinate.

Recall that, for $A\subseteq \u$,  a function $f:\u^n\to\u$ is \emph{$A$-definable} if there is a countable sequence of formulae $(\varphi_k(x,y) \ | \ k<\omega)$ with parameters from $A$ such that the intepretations of the $\varphi_k$'s in $\u$, which are functions $\varphi_k^{\u}\co \u^{n+1}\to [0,1]$, converge uniformly to the function $d(f(x),y)$.  Since each $\varphi_k$ can only mention a finite number of elements of $A$, we can always take $A$ to be countable.  The sequence of functions $(\varphi_k^\UU)$ will also converge uniformly to a predicate, whose zeroset is the graph of a function, which we denote by $\tilde{f}:\UU^n\to\UU$ and call the \emph{natural extension of $f$ to $\UU$}.  By the construction of $\tilde{f}$, it is clear that $\tilde{f}$ is also an $A$-definable function.  Both $f$ and $\tilde{f}$ are uniformly continuous and we will let $\Delta:(0,1]\to (0,1]$ denote a modulus of uniform continuity for both $f$ and $\tilde{f}$.  (For proofs of all of these facts, see Section 9 of \cite{BBHU}.)  Finally, if $X$ is a definable subset of $\UU^n$, then, by $\omega_1$-saturation, $\tilde{f}(X)$ is a closed subset of $\UU$.  (In fact, $\tilde{f}(X)$ is a definable subset of $\UU$, although we will not need this fact; see \cite{B}, Lemma 1.20.)

As in classical model theory, if $f:\u^n\to\u$ is $A$-definable, then, for all $x\in \UU^n$, we have $\tilde{f}(x)\in \dcl(Ax)$.  However, Henson proved that $\dcl(B)=\acl(B)=\bar{B}$ for any $B\subseteq \UU$. (For a proof of this last fact, see Fact 5.3 of \cite{Gold}.)  Thus, for $x=(x_1,\ldots,x_n)\in \UU^n$, we have $\tilde{f}(x)\in \bar{A}\cup \{x_1,\ldots,x_n\}$.  This last fact is the key ingredient in the proof of our structure theorem.  

%\begin{lemma}
%Suppose $|f(\u)|<2^{\aleph_0}$.  Then $f$ is constant.
%\end{lemma}

%\begin{proof}
%Since $f(\u)$ is a closed subset of the Polish space $\u$, the assumption of the lemma tells us that $f(\u)$ is not perfect.  Take $b\in f(\u)$ which is isolated in $f(\u)$.  Then $\u$ is the disjoint union of the closed sets $f^{-1}(f(\u)\setminus \{b\})$ and $f^{-1}(b)$.  Since $f^{-1}(b)$ is a nonempty clopen subset of $\u$, it must equal $\u$, proving that $f(x)=b$ for all $x\in \u$.
%\end{proof}

\section{Structure Theorem for Definable Functions}

Before we prove our main structure theorem for definable functions, we need a few preparatory lemmas.  Recall that a metric space $M$ is said to be \emph{finitely injective} if whenever $a_1\ldots,a_n\in M$ and $\{a_1,\ldots,a_n,a\}$ is an abstract one-point metric extension of $\{a_1,\ldots,a_n\}$, then there is $a'\in M$ such that $\{a_1,\ldots,a_n,a\}$ is isometric to $\{a_1,\ldots,a_n,a'\}$.

\begin{lemma}\label{L:pc}
Suppose that $(x_i \ | \ i<\omega)$ is a sequence from $\UU$ and $(r_i \ | \ i<\omega)$ is a sequence from $\r^{>0}$.  Let $B=\bigcup_{i<\omega} B_\UU(x_i;r_i)$.  Then $\UU\setminus B$ is finitely injective.
\end{lemma}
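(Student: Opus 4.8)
The plan is to reduce the assertion, via the $\omega_1$-saturation of $\UU$, to a finite problem about metric extensions that is then handled by the finite injectivity of $\UU$ itself. First I would dispose of a degenerate case: if some $r_i>1$, then $B_\UU(x_i;r_i)=\UU$ because $\UU$ has diameter $1$, so $B=\UU$ and there is nothing to prove; hence assume $r_i\le 1$ for all $i$. Now suppose $a_1,\ldots,a_n\in\UU\setminus B$ and that $\{a_1,\ldots,a_n,a\}$ is an abstract one-point metric extension of $\{a_1,\ldots,a_n\}$; put $\rho_j:=d(a,a_j)$. We must find $a'\in\UU\setminus B$ with $d(a',a_j)=\rho_j$ for every $j$. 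Consider the conditions
$$\Sigma(v):=\{\,d(v,a_j)=\rho_j \ :\ 1\le j\le n\,\}\cup\{\,d(v,x_i)\ge r_i \ :\ i<\omega\,\},$$
whose parameters form the countable set $\{a_1,\ldots,a_n\}\cup\{x_i:i<\omega\}$. A realization of $\Sigma$ in $\UU$ is precisely a point that has the right distances to the $a_j$ and lies outside every $B_\UU(x_i;r_i)$, hence outside $B$; so by $\omega_1$-saturation it is enough to show that every finite subset of $\Sigma$ is realized in $\UU$.

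Given a finite subset, which without loss of generality retains all the conditions $d(v,a_j)=\rho_j$ and involves the balls indexed by a finite set $I_0$, I would build an abstract one-point metric extension of $X:=\{a_1,\ldots,a_n\}\cup\{x_i:i\in I_0\}$ by a point $a'$ with $d(a',a_j)=\rho_j$ and $d(a',x_i)\ge r_i$, and then realize it in $\UU$ using that $\UU$ is finitely injective (a consequence of quantifier elimination for $\operatorname{Th}(\u)$ together with $\omega_1$-saturation; cf.\ \cite{Usvy}). The extension I propose to use is the maximal Kat\v{e}tov one: set $d(a',a_j):=\rho_j$ and, for $i\in I_0$,
$$d(a',x_i):=\min\Bigl(1,\ \min_{1\le j\le n}\bigl(\rho_j+d(x_i,a_j)\bigr)\Bigr)$$
(to be read as $1$ when $n=0$). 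That this assignment defines a genuine metric of diameter $\le1$ on $X\cup\{a'\}$ extending the given one is a routine verification with the triangle inequality, using that $a_1,\ldots,a_n,a$ already form a metric space.

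The only step that is not bookkeeping — and really the point of the lemma — is the verification that this extension keeps $a'$ out of the prescribed balls, i.e.\ that $d(a',x_i)\ge r_i$ for each $i\in I_0$. This is exactly where the hypothesis $a_j\in\UU\setminus B$ enters: since $a_j\notin B_\UU(x_i;r_i)$ we have $d(x_i,a_j)\ge r_i$ for every $j$, so each term $\rho_j+d(x_i,a_j)$ is at least $r_i$, and $1\ge r_i$ by the reduction above; hence the minimum defining $d(a',x_i)$ is $\ge r_i$. Granting this, the chosen finite subset of $\Sigma$ is realized by the witnessing point of $\UU$, and $\omega_1$-saturation then produces the desired $a'\in\UU\setminus B$.

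I expect the main obstacle to be merely expository, namely confirming that the displayed choice of distances really is a Kat\v{e}tov function: one has to run through the triangle inequalities among the $x_i$ and between the $x_i$ and the $a_j$, with a short case split according to whether the value $1$ is attained in the outer minimum. A secondary point worth flagging is that the lemma genuinely requires the monster model $\UU$ rather than $\u$, since we are realizing a type over an infinite (though countable) parameter set — which is precisely what $\omega_1$-saturation delivers.
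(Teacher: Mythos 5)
Your proposal is correct and follows essentially the same route as the paper: reduce via $\omega_1$-saturation to finite satisfiability of the type fixing the distances to the $a_j$ and keeping distance at least $r_i$ from finitely many $x_i$, then realize the ``maximal'' one-point extension $d(a',x_i)=\min_j(\rho_j+d(x_i,a_j))$ using finite injectivity of $\UU$, noting that each term is at least $r_i$ because $a_j\notin B$. Your truncation at $1$ and the explicit handling of the $r_i>1$ case are minor refinements the paper omits, but the argument is the same.
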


\begin{proof}
Fix $a_1,\ldots,a_n\in \UU\setminus B$ and let $\{a_1,\ldots,a_n,a\}$ be a one-point metric extension of $\{a_1,\ldots,a_n\}$.  By $\omega_1$-saturation, it suffices to prove that, for any $m<\omega$, the partial type 
$$\Gamma=\{d(x,a_i)=d(a,a_i) \ | \ 1\leq i\leq n\} \cup \{d(x, x_i)\geq r_i \ | \ 1\leq i\leq m\}$$  is finitely satisfiable in $\UU$.  Consider the one-point metric exension $$\{a_1,\ldots,a_n,x_1,\ldots,x_m,x\}$$ of $\{a_1,\ldots,a_n,x_1,\ldots,x_m\}$ given by:
\begin{itemize}
\item $d(x,a_i)=d(a,a_i)$ for each $i\in\{1,\ldots,n\}$, and 
\item $d(x,x_j)=\min_{1\leq k \leq n} (d(a,a_k)+d(a_k,x_j))$ for each $j\in \{1,\ldots,m\}$.
\end{itemize}
Then since $\UU$ is finitely injective, we can find $a'\in \UU$ realizing this extension.  Since $d(a',x_j)\geq \min_{1\leq k \leq n} d(a_k,x_j)\geq r_j$, it follows that $\Gamma$ is finitely satisfiable.  
%Suppose $a,b\in \UU\setminus B$.  Let $(\lambda_n \ | \ n<\omega)$ enumerate a dense subset of $[0,1]$.  Consider the following set of conditions:
%$$\{d(z_{\lambda_n},x_m)=\lambda_n d(a,x_m)+(1-\lambda_n)d(b,x_m) \ | \ n,m<\omega\}$$ together with$$\{d(z_{\lambda_n},z_{\lambda_m})\leq 2|\lambda_n-\lambda_m| \ | \ n,m<\omega\}.$$  By finite injectivity, this is finitely satisfiable in $\UU$, whence it is satisfiable in $\UU$ by saturation.  This yields a continuous map $\lambda \mapsto z_\lambda:[0,1]\to\UU$.  Moreover, since $\operatorname{Th}(\u)$ admits quantifier elimination (see \cite{Usvy}), the saturation assumption on $\UU$ implies that we can further assume that $z_0=b$ and $z_1=a$.  It remains to notice that $d(z_\lambda,x_m)\geq r_m$ for each $m<\omega$ and $\lambda\in [0,1]$.
\end{proof}

We will only need a consequence of the preceding lemma, namely that $\UU\setminus B$ is path-connected.  (To see how finite injectivity implies path-connectedness, see Sections 3 and 4 of \cite{Julien}.)  We will actually need path-connectedness of the complement of countably many balls in higher dimensions, which is the subject of our next lemma.  Let $\epsilon_0$ be a positive real number such that $\UU$ cannot be covered by finitely many balls of radius $\epsilon_0$.

\begin{lemma}
Suppose that $((x_1^i,\ldots,x_n^i) \ | \ i<\omega)$ is a sequence from $\UU^n$.  Suppose that $(r_i \ | \ i<\omega)$ is a sequence from $\r^{>0}$ such that $r_i<\epsilon_0$ for each $i<\omega$.  Let $B=\bigcup_{i<\omega} B_{\UU^n}((x_1^i,\ldots,x_n^i);r_i)$.  Then $\UU^n\setminus B$ is path-connected.  
\end{lemma}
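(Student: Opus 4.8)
The strategy is to reduce path-connectedness of $\UU^n \setminus B$ to the one-dimensional case handled in Lemma~\ref{L:pc}. Given two points $a = (a_1,\ldots,a_n)$ and $b = (b_1,\ldots,b_n)$ in $\UU^n \setminus B$, I would try to connect them by moving one coordinate at a time: first a path from $(a_1,a_2,\ldots,a_n)$ to $(b_1,a_2,\ldots,a_n)$, then from $(b_1,a_2,a_3,\ldots,a_n)$ to $(b_1,b_2,a_3,\ldots,a_n)$, and so on. The trouble with doing this naively is that holding $n-1$ coordinates fixed and varying the $j$-th one traces out a copy of $\UU$ inside $\UU^n$, but the set of points on that line which land in $B$ need not be a union of balls of $\UU$ of radius below $\epsilon_0$ — a ball $B_{\UU^n}((x_1^i,\ldots,x_n^i);r_i)$ intersects the line $\{(c_1,\ldots,c_{j-1},t,c_{j+1},\ldots,c_n) : t\in\UU\}$ in the set $\{t : d(t,x_j^i) < r_i \text{ and } d(c_k,x_k^i)<r_i \text{ for all }k\neq j\}$, which is either empty or an open ball in $\UU$ of radius $r_i < \epsilon_0$. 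So in fact the restriction of $B$ to such a line \emph{is} a union of countably many open balls of radius $<\epsilon_0$, hence (a fortiori) a union of countably many open balls; by Lemma~\ref{L:pc} the complement of that union in $\UU \cong$ (the line) is path-connected.

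**The main steps.** First I would set up the ``axis line'' notation: for a fixed tuple $c \in \UU^n$ and index $j$, let $\ell_{j,c} : \UU \to \UU^n$ send $t$ to the tuple agreeing with $c$ off the $j$-th coordinate and equal to $t$ there; this is an isometric embedding. Second, I would compute $\ell_{j,c}^{-1}(B)$ and observe, as above, that it is a union of countably many open balls of $\UU$, so by Lemma~\ref{L:pc} its complement is finitely injective, hence path-connected (citing Sections 3--4 of \cite{Julien} as the excerpt does). Third — and this is the step requiring care — I need to choose the intermediate tuples so that each axis line actually passes through $\UU \setminus B$ at both relevant endpoints. Connecting $a$ to $b$ by changing coordinates one at a time, the $k$-th intermediate point is $p_k = (b_1,\ldots,b_k,a_{k+1},\ldots,a_n)$, and I must check $p_k \notin B$ for each $k$. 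This is \emph{not} automatic, so instead I would connect $a$ and $b$ through a common auxiliary point. The cleanest route: since $\UU$ cannot be covered by finitely many balls of radius $\epsilon_0$ and since for each $i$ the ``$j$-th coordinate projection'' of the ball $B_{\UU^n}(x^i;r_i)$ is contained in the ball $B_\UU(x_j^i;r_i)$ of radius $<\epsilon_0$, the set $\bigcup_i B_\UU(x_j^i; r_i)$ does not cover $\UU$; but more is true by $\omega_1$-saturation — I can find a single element $u \in \UU$ with $d(u, x_j^i) \geq r_i$ for all $i$ and all $j$ simultaneously (the relevant partial type is finitely satisfiable because $\UU$ can't be covered by finitely many $\epsilon_0$-balls). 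Then the constant tuple $(u,\ldots,u) \notin B$, and I claim $a$ can be joined to $(u,u,\ldots,u)$: walk along axis line $\ell_{1,(a_1,\ldots,a_n)}$ from $a_1$ to $u$ staying out of $\ell_{1,a}^{-1}(B)$ (possible since that complement is path-connected and contains both $a_1$, as $a\notin B$, and $u$), reaching $(u,a_2,\ldots,a_n)$ — wait, this needs $(u,a_2,\ldots,a_n)\notin B$ too, which again is not free.

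**Fixing the endpoint bookkeeping.** The honest fix, which I expect is the intended one, is to do the induction differently: prove by induction on $n$ that $\UU^n \setminus B$ is path-connected, where at stage $n$ one writes a point of $\UU^n$ as $(y, z)$ with $y \in \UU^{n-1}$, $z \in \UU$, and uses that for each fixed $y$ the ``fiber'' $\{z : (y,z)\notin B\}$ is the complement in $\UU$ of countably many open balls (hence path-connected by Lemma~\ref{L:pc}, or empty), while the ``base'' projection interacts well with the induction. Concretely, given $(y,z)$ and $(y',z')$ both outside $B$: first note that for \emph{each} $i$, the set $B_i' := \{y'' \in \UU^{n-1} : d(y'', (x_1^i,\ldots,x_{n-1}^i)) < r_i \text{ and } d(z, x_n^i) < r_i\}$ is an open ball in $\UU^{n-1}$ of radius $<\epsilon_0$ or is empty, so $\{y'' : (y'',z)\in B\}$ is a union of countably many such balls; by the inductive hypothesis its complement in $\UU^{n-1}$ is path-connected, and it contains $y$. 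Hmm — but it might not contain $y'$. This circularity around choosing compatible endpoints is the genuine obstacle, and I expect the real proof sidesteps it by first using $\omega_1$-saturation (exactly as in Lemma~\ref{L:pc}) to produce, for the two given points, a single auxiliary point $w\in\UU$ such that $w$ avoids every ball's $k$-th coordinate projection \emph{and} is ``far enough'' that $(w,w,\ldots,w)$ and all the hybrid tuples $(b_1,\ldots,b_k,w,\ldots,w)$, $(w,\ldots,w,a_{k+1},\ldots,a_n)$ avoid $B$ — this can be arranged because for a tuple to lie in $B_{\UU^n}(x^i;r_i)$ \emph{every} coordinate must be within $r_i<\epsilon_0$ of the corresponding $x_k^i$, and a suitable finitely-satisfiable type (satisfiable since $\UU$ is not covered by finitely many $\epsilon_0$-balls) lets us pick $w$ with $d(w,x_k^i)\geq r_i$ for all $i\le m$, $k\le n$ at once. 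Then the one-coordinate-at-a-time path from $a$ to $(w,\ldots,w)$ to $b$ works, each leg being a path in the complement of countably many balls in a copy of $\UU$, supplied by Lemma~\ref{L:pc}. I would present the proof in this order: (1) axis-line setup and the computation that $\ell_{j,c}^{-1}(B)$ is a countable union of open balls; (2) the saturation argument producing the universal avoider $w$; (3) the concatenation of $n$ single-coordinate paths via Lemma~\ref{L:pc}. The main obstacle, as flagged, is (2) — making sure the auxiliary point keeps \emph{all} the intermediate hybrid tuples out of $B$, which is what forces the use of $\omega_1$-saturation rather than a purely topological argument.
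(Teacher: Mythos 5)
Your argument is correct, and it uses the same two essential ingredients as the paper --- Lemma \ref{L:pc} applied to isometric copies of $\UU$ sitting inside $\UU^n$, and $\omega_1$-saturation (via the hypothesis $r_i<\epsilon_0$) to manufacture points avoiding all the coordinate balls $B_\UU(x_j^i;r_i)$ --- but the path decomposition is genuinely different. The paper also routes $a$ and $b$ through a common avoider tuple $(c_1,\ldots,c_n)$, with each $c_j\notin\bigcup_i B_\UU(x_j^i;r_i)$, but it moves all coordinates \emph{simultaneously}: each coordinate path $\Phi_j$ from $a_j$ to $c_j$ is chosen, via Lemma \ref{L:pc}, to avoid exactly those balls $B_\UU(x_j^i;r_i)$ for which $d(a_j,x_j^i)\geq r_i$, and then the product path $(\Phi_1,\ldots,\Phi_n)$ stays out of $B$ because, for each $i$, whichever coordinate witnesses $a\notin B_{\UU^n}(x^i;r_i)$ keeps witnessing it along the entire path. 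Your one-coordinate-at-a-time concatenation instead pushes the difficulty onto the intermediate endpoints, and you resolve it correctly with the single simultaneous avoider $w$ satisfying $d(w,x_k^i)\geq r_i$ for all $i$ and $k$: since a tuple lies in $B_{\UU^n}(x^i;r_i)$ only if \emph{every} coordinate is within $r_i$ of the corresponding $x_k^i$, any hybrid tuple containing a $w$ is automatically outside $B$, and in fact every leg except the one leaving $a$ and the one arriving at $b$ runs along an axis line whose intersection with $B$ is empty. The trade-off is mild: the paper needs the more delicate ``avoid whatever your starting coordinate avoids'' condition on each $\Phi_j$ but only a per-coordinate avoider $c_j$, while you need the slightly stronger single avoider $w$ but get the middle legs for free. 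Your worry in the second paragraph that the hybrid endpoints are ``not free'' is dissolved precisely by this observation, as your final paragraph correctly notes, so the finished argument has no gap.
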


\begin{proof}
Fix $a,b\in \UU^n\setminus B$.  Then for every $i<\omega$, there are $j,k\in\{1,\ldots,n\}$ such that $d(a_j,x_j^i),d(b_k,x_k^i)\geq r_i$.  For each $j\in \{1,\ldots,n\}$, let $c_j\in \UU\setminus \bigcup_{i<\omega} B_\UU(x_j^i;r_i)$; this is possible by saturation and the assumption on the $r_i$.  For each $j\in \{1,\ldots,n\}$, let $\Phi_j:[0,1]\to \UU$ be a continuous path connecting $a_j$ to $c_j$ such that, for all $i<\omega$, if $d(a_j,x_j^i)\geq r_i$, then $\Phi_j(s)\notin B_\UU(x_j^i; r_i)$ for all $s\in [0,1]$; this is possible by Lemma \ref{L:pc}.  Let $\Phi=(\Phi_1,\ldots,\Phi_n):[0,1]\to \UU^n$.  Then $\Phi$ is a continuous path connecting $(a_1,\ldots,a_n)$ to $(c_1,\ldots,c_n)$ which remains in $\UU^n\setminus B$.  One can connect $(c_1,\ldots,c_n)$ to $(b_1,\ldots,b_n)$ in a similar manner.  Concatenating these two paths yields the desired result.
\end{proof}

\begin{lemma}\label{L:sep}
Suppose that $F$ is a closed subset of $\UU^n$ and $G$ is a closed, separable subset of $F$ for which $F\setminus G\subseteq \int(F)$.  Then either $F=G$ or $F=\UU^n$.
\end{lemma}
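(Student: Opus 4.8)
The plan is to argue by contradiction: assume $F \neq G$ and $F \neq \UU^n$, and derive a contradiction from the separability of $G$ together with the non-separability of closed balls in $\UU^n$ (which follows from $\omega_1$-saturation). Since $F \neq \UU^n$, pick a point $p \in \UU^n \setminus F$. Since $F \neq G$, pick a point $q \in F \setminus G$; by hypothesis $q \in \int(F)$, so some closed ball $\bar B_{\UU^n}(q;\delta)$ is contained in $F$.

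The key step is to cover the separable set $G$ by countably many small open balls and then use the path-connectedness lemma (the second lemma above) to route a path from $p$ to $q$ that avoids all of these balls, hence avoids $G$. Concretely: since $G$ is separable, fix a countable dense subset $\{g_i \mid i<\omega\}$ of $G$ and fix a radius $r < \epsilon_0$ small enough that (in addition to $r<\epsilon_0$) we have $r < d(p,F)$ — wait, that is not needed; rather, cover $G$ by the balls $B_{\UU^n}(g_i; r_i)$ where each $r_i < \epsilon_0$ is chosen small, say $r_i = \min(\epsilon_0/2, \delta)$ — actually we want the balls small enough that they do not swallow $q$, so take each $r_i < \min(\epsilon_0, \delta, d(q,G))$; note $d(q,G)>0$ since $G$ is closed and $q \notin G$. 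Let $B = \bigcup_{i<\omega} B_{\UU^n}(g_i; r_i)$. Then $G \subseteq B$, the point $q$ lies in $\UU^n \setminus B$ (as $d(q,g_i)\geq d(q,G) > r_i$), and certainly $p \in \UU^n\setminus B$ as well (shrinking the $r_i$ further if necessary, or simply noting $p \notin G$ and re-choosing; in any case we may assume $p\notin B$). By the second lemma, $\UU^n \setminus B$ is path-connected, so there is a continuous path $\gamma:[0,1]\to \UU^n\setminus B$ with $\gamma(0)=p$ and $\gamma(1)=q$. Since $G\subseteq B$, this path avoids $G$ entirely: $\gamma([0,1]) \subseteq \UU^n \setminus G$.

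Now I use that $F$ is closed and $q\in\int(F)$. Consider $t^* = \sup\{t\in[0,1] \mid \gamma(t)\notin F\}$; this is well-defined since $\gamma(0)=p\notin F$, and $t^* < 1$ because $\gamma(1)=q\in\int(F)$ so an initial segment near $t=1$ stays in $F$ — more precisely $t^* \le 1 - s$ for some $s>0$ with $\gamma([1-s,1])\subseteq \bar B_{\UU^n}(q;\delta)\subseteq F$. By continuity and the definition of supremum, $\gamma(t^*)\in F$ (it is a limit of points with $\gamma(t)\in F$ for $t$ slightly above $t^*$, since $F$ is closed and $t^*<1$), while $\gamma(t^*)$ is also a limit from the left of points outside $F$. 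But the hypothesis $F\setminus G \subseteq \int(F)$ forces every point of $F$ to be either in $G$ or in the interior of $F$; since $\gamma(t^*)\notin G$ (the path avoids $G$), we get $\gamma(t^*)\in\int(F)$, contradicting the fact that $\gamma(t^*)$ is a limit of points in the complement of $F$. This contradiction completes the proof.

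The main obstacle is the bookkeeping in choosing the radii $r_i$: they must simultaneously be below $\epsilon_0$ (so the second lemma applies), small enough that $q$ and $p$ are not covered, and the balls must still cover $G$ — the last point is automatic once each $g_i$ is the center of its own ball with positive radius, so the only real constraint is $r_i < \min(\epsilon_0, d(q,G), d(p,G))$ (both distances positive since $G$ is closed and misses $p,q$), which is easily arranged. Everything else is a standard connectedness/boundary argument. One should double-check that the second lemma's hypothesis (a countable union of balls of radius $<\epsilon_0$) is exactly what we have arranged, which it is.
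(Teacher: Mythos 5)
Your proposal is correct and follows essentially the same route as the paper: cover the separable set $G$ by countably many balls of small radius, invoke the path-connectedness of the complement of those balls, and use connectedness to conclude. The only cosmetic difference is at the end, where the paper observes that $F\cap Y=\int(F)\cap Y$ is a nonempty clopen subset of the connected set $Y$ (and then lets the radius tend to $0$), while you run an explicit path from a point off $F$ to a point of $F\setminus G$ and locate a boundary point --- two standard, equivalent ways of exploiting the same connectedness.
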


\begin{proof}
Suppose $F\not=G$.  Let $y\in F\setminus G$.  Let $0<r<\min(d(y,G),\epsilon_0)$.  Cover $G$ with countably many balls of radius $r$ and call the union of these balls $B$.  Set $Y=\UU^n\setminus B$, which is path-connected by the previous lemma.  Now $F\cap Y=\int(F)\cap Y$ is a nonempty, clopen subset of $Y$, implying that $F\cap Y=Y$.  It follows that $Y\subseteq F$.  Since $r$ can be taken to be arbitrarily small, this shows that $\UU^n\setminus G\subseteq F$, whence $F=\UU^n$.
\end{proof}

For the rest of this section, $A$ denotes a countable subset of $\u$.  We are now ready to state the main theorem on definable functions.

\begin{thm}\label{T:many}
If $f:\u^n\to\u$ is an $A$-definable function, then either:
\begin{enumerate}
\item $\tilde{f}(\UU^n)$ is a compact subset of $\bar{A}$, or 
\item $\tilde{f}=\pi_i$ for some $i\in \{1,\ldots,n\}$.
\end{enumerate}
\end{thm}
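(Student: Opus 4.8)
The starting point is the fact isolated at the end of the introduction: for every $x=(x_1,\dots,x_n)\in\UU^n$ one has $\tilde f(x)\in\bar A\cup\{x_1,\dots,x_n\}$. I would organise everything around the sets
\[
Q=\{x\in\UU^n:\tilde f(x)\in\bar A\},\qquad
W_i=\{x\in\UU^n:\tilde f(x)=x_i\notin\bar A\text{ and }x_i\neq x_j\text{ for all }j\neq i\}.
\]
Here $Q$ is closed (a preimage of the closed set $\bar A$) and each $W_i$ is open: if $x\in W_i$ then $\tilde f(x)$ is at strictly positive distance from $\bar A$ and from each $x_j$ ($j\neq i$), and these strict inequalities persist under small perturbations of $x$ by continuity of $\tilde f$ and of the coordinate maps, which forces $y\in W_i$ for $y$ near $x$. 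If $x\notin Q$ then $\tilde f(x)=x_i$ for some $i$, and if the coordinates of $x$ are moreover pairwise distinct then $x\in W_i$; hence $\UU^n\setminus\big(Q\cup\bigcup_i W_i\big)$ is contained in the ``big diagonal'' $D=\bigcup_{j<k}\{x:x_j=x_k\}$, which is closed with empty interior (no open ball of $\UU^n$ can lie on a diagonal, as closed balls of $\UU$ are non-separable, hence infinite). This yields the governing dichotomy: \emph{either} $W_i=\emptyset$ for every $i$, in which case $\UU^n\setminus Q$ is an open subset of $D$, hence empty, so $\tilde f(\UU^n)\subseteq\bar A$; \emph{or} $W_i\neq\emptyset$ for some $i$.

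In the first case I claim the range is compact, giving alternative~(1). The range is closed in $\UU$ by $\omega_1$-saturation, hence complete; if it were not compact it would contain an infinite $\delta$-separated set of values $z_k=\tilde f(y^k)$. By a recursion of length $\omega_1$ one can then extract an uncountable $\tfrac\delta3$-separated family $\{\tilde f(v^\alpha):\alpha<\omega_1\}$ from the range: at stage $\beta$ the partial type $\{\,d(\tilde f(v),\tilde f(v^\alpha))\geq\tfrac\delta3:\alpha<\beta\,\}$ in the variable $v$ is finitely satisfiable, since for any finitely many $\alpha_1,\dots,\alpha_m$ at most $m$ of the $z_k$ lie within $\tfrac\delta3$ of $\{\tilde f(v^{\alpha_1}),\dots,\tilde f(v^{\alpha_m})\}$, so one of the $v^k$ does the job; $\omega_1$-saturation then realises it. But an uncountable $\tfrac\delta3$-separated subset of $\bar A$ contradicts separability of $\bar A$.

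The substantive case is $W_i\neq\emptyset$; after permuting coordinates say $i=n$, and fix $x^0=(c^0,s^0)\in W_n$ with $c^0\in\UU^{n-1}$. The key tool is the one-dimensional case, which I would establish first and for an \emph{arbitrary} countable $B\subseteq\UU$ (the slices below carry parameters outside $\u$, while the preparatory lemmas and Henson's $\dcl(B)=\bar B$ already hold over any $B\subseteq\UU$): \emph{a $B$-definable $g\co\UU\to\UU$ either has compact range contained in $\bar B$, or equals $\id_\UU$}. This is the $n=1$ instance of the scheme above: the open set $W=\{t:g(t)=t\notin\bar B\}$ is either empty, in which case $g(\UU)\subseteq\bar B$ and compactness follows as in the previous paragraph, or nonempty, in which case, putting $P=\{t:g(t)=t\}$ and $G=\{t:g(t)=t\in\bar B\}$, one has $G$ closed and separable, $P\setminus G=W\subseteq\int(P)$, and $P\neq G$, so Lemma~\ref{L:sep} forces $P=\UU$, i.e.\ $g=\id_\UU$.

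To conclude, for $c\in\UU^{n-1}$ set $g_c\co\UU\to\UU$, $g_c(t)=\tilde f(c,t)$, which is definable over the countable set $A\cup\{c_1,\dots,c_{n-1}\}$. Since $x^0\in W_n$ we have $g_{c^0}(s^0)=s^0\notin\bar A\cup\{c^0_1,\dots,c^0_{n-1}\}$, so the one-dimensional statement gives $g_{c^0}=\id_\UU$. Let $S=\{c\in\UU^{n-1}:g_c=\id_\UU\}\ni c^0$. The function $c\mapsto\sup_t d(\tilde f(c,t),t)$ is uniformly continuous (uniform continuity of $\tilde f$), so $S$, its zero set, is closed; and $S$ is open, because for $c\in S$ and $d(c',c)$ small one has $\sup_t d(\tilde f(c',t),t)=\sup_t d(\tilde f(c',t),\tilde f(c,t))$ small, so the range of $g_{c'}$ is $\eta$-dense in $\UU$ for a small $\eta$, hence not compact (no compact subset of $\UU$ is $\eta$-dense once $\eta<\epsilon_0$, else $\UU$ would be a finite union of $\epsilon_0$-balls), which via the one-dimensional statement forces $g_{c'}=\id_\UU$. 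As $\UU^{n-1}$ is connected (cf.\ the path-connectedness used around Lemma~\ref{L:pc}; see \cite{Julien}), the nonempty clopen set $S$ is all of $\UU^{n-1}$, so $\tilde f(c,t)=t$ for all $(c,t)$, i.e.\ $\tilde f=\pi_n$ --- alternative~(2). I expect this last paragraph to be the main obstacle: $W_i\neq\emptyset$ only gives agreement of $\tilde f$ with $\pi_i$ on a nonempty open set, and upgrading that to agreement everywhere genuinely requires the interplay of uniform continuity, the non-total-boundedness of $\UU$ (to rule out the compact-range alternative near a good slice), and connectivity; arranging the one-dimensional case to fit Lemma~\ref{L:sep} --- in particular the choice of the separable $G$ --- is the other delicate point.
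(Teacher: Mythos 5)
Your proposal is correct, and it reaches the theorem by a genuinely different route from the paper. The paper proceeds by induction on $n$: it uses density and path-connectedness of $(\UU\setminus\u)^n$ to obtain the dichotomy ``$\tilde f(\UU^n)\subseteq\bar A$ or $\tilde f(x)\in\{x_1,\dots,x_n\}$ everywhere,'' and then combines the one-variable case applied to the slices $f_a$ ($a\in\u^{n-1}$) with the inductive hypothesis applied to the slices $f^b$ ($b\in\u$), gluing via connectedness of $\u^{n-1}$ and of $\u$. You instead organize the dichotomy around the open sets $W_i$ and the big diagonal $D$ (whose empty interior plays the role the density of $(\UU\setminus\u)^n$ plays in the paper), and you avoid induction entirely: a single nonempty $W_n$ hands you one slice $g_{c^0}$ equal to $\id_\UU$, and your clopen set $S\subseteq\UU^{n-1}$ plus connectedness of $\UU^{n-1}$ finishes the job. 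The price is that you must prove the one-variable case over arbitrary countable $B\subseteq\UU$ rather than $B\subseteq\u$ --- a real but harmless strengthening, since Lemma \ref{L:sep}, Henson's $\dcl(B)=\bar B$, and the closedness of definable images are all insensitive to where the parameters live; you correctly flag this. Your other local variations are also sound: applying Lemma \ref{L:sep} to the fixed-point set $P$ with $G=P\cap\bar B$ rather than to $\tilde f^{-1}(\bar A)$ with $G=X\cap\bar A$; proving compactness of a range contained in $\bar A$ by extracting an uncountable separated family via an $\omega_1$-recursion (versus the paper's direct total-boundedness argument with an approximating formula and saturation); and proving openness of $S$ via $\eta$-density of the range versus non-total-boundedness of $\UU$ (the paper's Lemma \ref{L:conn} instead uses that the closed $\epsilon$-neighborhood of $\u$ is not all of $\UU$). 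What your approach buys is the elimination of the induction and of the bookkeeping around $(\dagger)$ and $(\dagger\dagger)$; what the paper's buys is that every definable function it manipulates has parameters in $\u$ itself, so only the natural-extension formalism of the introduction is ever invoked.
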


%Set $X=\{x\in \u \ | \ f(x)=x\}$.

%\begin{lemma}\label{L:int}
%$\tilde{f}^{-1}(\bar{A})\setminus (X\cap \bar{A})\subseteq \int(\tilde{f}^{-1}(\bar{A}))$.
%\end{lemma}

%\begin{proof}
%Suppose $x\in \UU$ is such that $\tilde{f}(x)\in \bar{A}$ and $\tilde{f}(x)\not=x$.  Let $r=d(\tilde{f}(x),x)$.  Let $\delta=\min\{\frac{r}{2},\Delta_f(\frac{r}{2})\}$.  Suppose that $y\in \UU$ is such that $d(x,y)<\delta$.  Then $d(\tilde{f}(x),\tilde{f}(y))\leq \frac{r}{2}$.  If $\tilde{f}(y)=y$, then $d(x,\tilde{f}(x))\leq d(x,y)+d(y,\tilde{f}(x))<r$, a contradiction.  Hence $\tilde{f}(y)\in \bar{A}$.  This proves the Claim. 
%\end{proof}

%By Lemmas \ref{L:sep} and \ref{L:int}, we have that either $\tilde{f}^{-1}(\bar{A})=X\cap \bar{A}$, in which case $\tilde{f}=\id_\UU$, or else $\tilde{f}(\UU)\subseteq \bar{A}$.  The latter option has the following consequence.

Our first lemma shows that compactness of the image of $\tilde{f}$ follows from the image of $\tilde{f}$ being contained in $\bar{A}$.

\begin{lemma}
Suppose that $\tilde{f}(\UU^n)\subseteq \bar{A}$.  Then $\tilde{f}(\UU^n)$ is compact.
\end{lemma}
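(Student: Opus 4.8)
The plan is to show that $\tilde f(\UU^n)$ is totally bounded; since it is a closed subset of the complete space $\UU$, this gives compactness. Total boundedness will follow by contradiction from saturation: if $\tilde f(\UU^n)$ were not totally bounded, there would be some $\delta>0$ and a sequence $(b_i\mid i<\omega)$ in $\tilde f(\UU^n)$ with $d(b_i,b_j)\geq\delta$ for all $i\neq j$. Each $b_i$ equals $\tilde f(a^i)$ for some $a^i\in\UU^n$, and by hypothesis $b_i\in\bar A$.

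The key point is that $\bar A$ is separable, being the closure of the countable set $A$. So the sequence $(b_i)$ lies in a separable metric space, hence has a Cauchy subsequence, contradicting the fact that the $b_i$ are $\delta$-separated. More directly: a separable metric space cannot contain an uncountable $\delta$-separated set for any fixed $\delta>0$, and in fact can contain only finitely many points of any maximal $\delta$-net inside a bounded region — but the cleanest phrasing is just that $\bar A$, being separable and bounded (it sits inside $\UU$ of diameter $1$), is totally bounded only if it is compact, which we do not know; so instead I would argue that $\overline{\tilde f(\UU^n)}=\tilde f(\UU^n)$ is a closed subset of the separable space $\bar A$, hence itself separable and complete, and then observe that a $\delta$-separated sequence in a separable space is impossible since separability forces a countable dense set which would have to come within $\delta/2$ of each $b_i$ via distinct points — giving infinitely many points of the dense set packed into finite diameter, still not an outright contradiction.

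The genuinely correct and simplest route, which I would take, is: $\tilde f(\UU^n)$ is closed in $\UU$ and contained in the separable set $\bar A$, hence $\tilde f(\UU^n)$ is a complete, separable metric space; but that alone is not compactness. So the real input must be saturation. I would instead proceed as follows. Suppose $\tilde f(\UU^n)$ is not compact; since it is complete it is not totally bounded, so fix $\delta>0$ and a $\delta$-separated sequence $(b_i\mid i<\omega)$ in $\tilde f(\UU^n)\subseteq\bar A$. Pick $a_i\in A$ with $d(a_i,b_i)<\delta/3$; then the $a_i$ are $(\delta/3)$-separated, so the set $\{a_i\mid i<\omega\}\subseteq A$ is infinite and $(\delta/3)$-separated. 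Now consider the $1$-type over $A$ asserting $d(x,a_i)\geq\delta/6$ for all $i$ together with $d(x,a)=\tfrac12$ for a fixed $a\in A$ (adjusting constants so this is a consistent finite metric extension at each stage, using finite injectivity of $\UU$): this type is finitely satisfiable, hence realized by some $c\in\UU$, and $c\in\dcl(A c)$... this is getting complicated.

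The hard part, and where I expect to spend the real effort, is pinning down the right argument: the statement is that $\tilde f(\UU^n)\subseteq\bar A$ forces compactness, and the mechanism should be that $\bar A$ is separable while any definable (hence closed) subset of $\UU$ that is also separable must be compact — because in $\UU$, closed bounded sets are non-separable unless compact (an instance of the earlier remark that closed balls in $\UU$ are non-separable by $\omega_1$-saturation). Concretely: I would show that a closed subset $C$ of $\UU$ which is not totally bounded contains, by finite injectivity and saturation, a non-separable subset (take a maximal $\delta$-separated set and expand it to size $\omega_1$ using that $\UU$ realizes the type extending any finite $\delta$-separated configuration by one more $\delta$-separated point — possible since $\delta$ can be taken below $\epsilon_0$), contradicting $C\subseteq\bar A$ separable. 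Thus $\tilde f(\UU^n)$ is totally bounded and closed in a complete space, hence compact. The main obstacle is verifying that the relevant partial types — "add one more point at distance $\geq\delta$ from all previous ones while staying at prescribed distances from finitely many parameters" — are consistent, which is exactly the content of Lemma~\ref{L:pc} and finite injectivity of $\UU$, so that lemma is the tool I would lean on.
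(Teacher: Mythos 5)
Your instinct that saturation must supply the compactness is right, but the argument you finally settle on has a genuine gap. You propose to show that a closed, non--totally-bounded subset $C$ of $\UU$ contains a non-separable subset, by starting from a $\delta$-separated set and extending it to size $\omega_1$ via finite injectivity (Lemma~\ref{L:pc}). The points you obtain this way lie in $\UU$, not in $C$: nothing in the types you realize forces the new points to belong to $\tilde{f}(\UU^n)$, so the resulting uncountable $\delta$-separated set need not sit inside $\bar{A}$, and there is no contradiction with the separability of $\bar{A}$. Indeed, the claim ``closed and not totally bounded implies contains a non-separable subset'' is false as stated: a countable, closed, $\delta$-separated set is a counterexample (it is uniformly discrete, hence closed, separable, and not totally bounded). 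So mere closedness of the image cannot suffice; you must use that $\tilde{f}$ is definable. To repair your route you would either have to invoke the definability of $\tilde{f}(\UU^n)$ as a subset of $\UU$ (a fact the paper cites but explicitly avoids using) so that the condition ``$x$ lies in the image'' can be put into the type, or else pull the type back to the domain: with $\varphi(x,y)$ a formula approximating $d(\tilde{f}(x),y)$ and $(b_i)$ a $\delta$-separated sequence in the image, realize $\{\varphi(x,b_i)\geq \delta/2 \mid i<\omega\}$ in the variable $x\in\UU^n$ and push the realization forward. You took neither step.

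For comparison, the paper's proof is shorter and runs in the opposite direction: enumerate $A=(a_i \mid i<\omega)$ and observe that the hypothesis $\tilde{f}(\UU^n)\subseteq\bar{A}$ makes the collection of closed conditions $\{\varphi(x,a_i)\geq\delta/2 \mid i<\omega\}$ unsatisfiable in $\UU$; by $\omega_1$-saturation some finite subcollection is already unsatisfiable, which says exactly that finitely many of the $a_i$ form a $\delta$-net for the image. This uses the countability of $A$ directly and needs neither finite injectivity nor any construction of separated sets.
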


\begin{proof}
Since $\tilde{f}(\UU^n)$ is closed (and hence complete), it suffices to show that $\tilde{f}(\UU^n)$ is totally bounded.  Fix $\delta>0$.  Let $(a_i \ | \ i<\omega)$ enumerate $A$.  Let $\varphi(x,y)$ be a formula such that $|\varphi(x,y)-d(\tilde{f}(x),y)|\leq \frac{\delta}{4}$ for all $x\in \UU^n$ and $y\in \UU$.  Since for every $x\in \UU^n$ there is $i<\omega$ such that $d(\tilde{f}(x),a_i)<\frac{\delta}{4}$, the collection of closed conditions $$\{\varphi(x,a_i)\geq \frac{\delta}{2} \ | \ i<\omega\}$$ is not satisfied in $\UU$.  Thus, by $\omega_1$-saturation, there $a_1,\ldots,a_n\in A$ such that, for every $x\in \UU^n$, there is $i\in \{1,\ldots,n\}$ satisfying $\varphi(x,a_i)<\frac{\delta}{2}$.  It follows that $\tilde{f}(\UU^n)\subseteq \bigcup_{i=1}^n B_\u(a_i;\delta)$.
\end{proof}

We now prove Theorem \ref{T:many} in the case $n=1$.  Suppose that $f:\u\to\u$ is an $A$-definable function.  Set $X=\{x\in \u \ | f(x)=x\}$.  

\begin{lemma}\label{L:int}
$\tilde{f}^{-1}(\bar{A})\setminus (X\cap \bar{A})\subseteq \int(\tilde{f}^{-1}(\bar{A}))$.
\end{lemma}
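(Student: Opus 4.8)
The plan is to argue by contradiction, using only the dichotomy $\tilde{f}(x)\in\bar{A}\cup\{x\}$ recorded at the end of the introduction (the case $n=1$ of the fact that $\tilde{f}(x)\in\bar A\cup\{x_1,\dots,x_n\}$) together with the continuity of $\tilde{f}$. So I would fix $x\in\tilde{f}^{-1}(\bar A)$ with $x\notin X\cap\bar A$, assume $x\notin\int(\tilde{f}^{-1}(\bar A))$, and derive $x\in X\cap\bar A$. Since $\UU$ is a metric space, the failure of $x$ to lie in the interior of $\tilde{f}^{-1}(\bar A)$ produces a sequence $(x_m \mid m<\omega)$ with $x_m\to x$ and $x_m\notin\tilde{f}^{-1}(\bar A)$ for every $m$, i.e. $\tilde{f}(x_m)\notin\bar A$.

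The key step is then immediate from the dichotomy: for each $m$, since $\tilde{f}(x_m)\in\bar A\cup\{x_m\}$ and $\tilde{f}(x_m)\notin\bar A$, we must have $\tilde{f}(x_m)=x_m$, so every $x_m$ is a fixed point of $\tilde{f}$. Now pass to the limit: $\tilde{f}$ is (uniformly) continuous and $x_m\to x$, so $\tilde{f}(x)=\lim_m\tilde{f}(x_m)=\lim_m x_m=x$. On the other hand $x\in\tilde{f}^{-1}(\bar A)$, so $x=\tilde{f}(x)\in\bar A$. Thus $x$ is a fixed point of $\tilde{f}$ lying in $\bar A$.

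It remains to identify such an $x$ as an element of $X\cap\bar A$, and this is the only point that needs a little care, since $X$ was defined as the fixed-point set of $f$ inside $\u$ rather than of $\tilde{f}$ inside $\UU$. Here I would use that $A\subseteq\u$ and $\u$, being complete, is closed in $\UU$, so $\bar A\subseteq\u$; hence $x\in\u$, and since $\tilde{f}$ extends $f$ we get $f(x)=\tilde{f}(x)=x$, i.e. $x\in X$. Therefore $x\in X\cap\bar A$, contradicting the choice of $x$, and the lemma follows. I do not expect a genuine obstacle: beyond the bookkeeping in this last step, the argument is just the dichotomy plus continuity, and the closedness of $\tilde{f}^{-1}(\bar A)$ is not even needed.
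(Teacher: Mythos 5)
Your argument is correct and is essentially the paper's proof in contrapositive form: both rest on exactly the two ingredients you name, the dichotomy $\tilde{f}(y)\in\bar{A}\cup\{y\}$ and the continuity of $\tilde{f}$. The paper argues directly, producing an explicit $\delta$-ball around $x$ on which no point can be fixed (via the triangle inequality against $r=d(x,\tilde{f}(x))$), whereas you let a sequence of fixed points accumulate at $x$ and pass to the limit; the bookkeeping step $\bar{A}\subseteq\u$ identifying $x\in X\cap\bar{A}$ is also right.
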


\begin{proof}
Suppose that $x\in \UU$ is such that $\tilde{f}(x)\in \bar{A}$ and $\tilde{f}(x)\not=x$.  Let $r=d(\tilde{f}(x),x)>0$.  Let $\delta=\min\{\frac{r}{2},\Delta(\frac{r}{2})\}$.  Suppose $y\in \UU$ is such that $d(x,y)<\delta$.  Then $d(\tilde{f}(x),\tilde{f}(y))\leq \frac{r}{2}$.  Suppose that $\tilde{f}(y)=y$.  Then $$d(x,\tilde{f}(x))\leq d(x,y)+d(y,\tilde{f}(x))<\frac{r}{2}+\frac{r}{2}=r,$$ a contradiction.  Hence $\tilde{f}(y)\in \bar{A}$, finishing the proof of the lemma.
\end{proof}

By Lemmas \ref{L:sep} and \ref{L:int}, it follows that either $\tilde{f}^{-1}(\bar{A})=X\cap \bar{A}$, in which case $\tilde{f}=\id_\UU$, or $\tilde{f}^{-1}(\bar{A})=\UU$.  Hence, Theorem \ref{T:many} holds for one-variable functions.  

We now prove Theorem \ref{T:many} by induction on $n$.  Suppose that $f:\u^n\to\u$ is an $A$-definable function, where $n>1$, and suppose that the conclusion of Theorem \ref{T:many} holds for definable functions of $n-1$ variables.  %For $a\in \u$ and $i\in \{1,\ldots,n\}$, define the function $f_{a,i}:\u^{n-1}\to\u$ by $f_{a,i}(b)=f(b_1,\ldots,b_{i-1},a,b_i,\ldots,b_{n-1})$.  Clearly $f_{a,i}$ is an $Aa$-definable function, and hence, by the inductive hypothesis, we have that either $\widetilde{f_{a,i}}=\pi_j$ for some $j\in \{1,\ldots,n-1\}$ or else $\widetilde{f_{a,i}}(\UU^{n-1})$ is a compact subset of $\overline{Aa}$.  Note that, for all $b\in \UU^{n-1}$, we have $$\widetilde{f_{a,i}}(b)=\tilde{f}(b_1,\ldots,b_{i-1},a,b_i,\ldots,b_{n-1}).$$

First observe that $\UU\setminus \u$ is path-connected.  Indeed, given $a,b\in \UU\setminus \u$, we can find $\epsilon>0$ such that $d(a,\u),d(b,\u)\geq \epsilon$.  Let $B$ be the union of countably many open balls of radius $\epsilon$ which cover $\u$.  Then by Lemma \ref{L:pc}, we can connect $a$ and $b$ by a path which remains in $\UU\setminus B$ and hence in $\UU\setminus \u$.  It follows that $(\UU\setminus \u)^n$ is path-conncted.  Let us also observe that $\UU\setminus \u$ is dense in $\UU$.  Indeed, let $x\in \u$ and $\epsilon>0$.  Since $B_\UU(x;\epsilon)$ is not separable while $B_\u(x;\epsilon)$ is separable, there is $y\in \UU\setminus \u$ with $d(x,y)<\epsilon$.  It then follows that $(\UU\setminus \u)^n$ is dense in $\UU^n$.  

Now suppose that $x=(x_1,\ldots,x_n)\in (\UU\setminus \u)^n$ and $\tilde{f}(x)\in \bar{A}$.  Let $\epsilon>0$ be such that $B_\UU(x_i;\epsilon)\subseteq \UU\setminus \u$ for each $i\in \{1,\ldots,n\}$.  Let $\delta=\min(\frac{\epsilon}{2},\Delta(\frac{\epsilon}{2}))$.  Then if $y=(y_1,\ldots,y_n)\in \UU^n$ is such that $d(x,y)<\delta$, then $\tilde{f}(y)\in \bar{A}$. Consequently, $\tilde{f}^{-1}(\bar{A})\cap (\UU\setminus \u)^n\subseteq \int(\tilde{f}^{-1}(A))$.  It follows that $$\tilde{f}^{-1}(\bar{A})\cap (\UU\setminus \u)^n=\int(\tilde{f}^{-1}(\bar{A}))\cap (\UU\setminus \u)^n$$ is a clopen subset of $(\UU\setminus \u)^n$, whence $\tilde{f}^{-1}(\bar{A})\cap (\UU\setminus \u)^n=\emptyset$ or $(\UU\setminus \u)^n$.

Suppose first that $\widetilde{f}^{-1}(\bar{A})\cap (\UU\setminus \u)^n=(\UU\setminus \u)^n$.  Then since $(\UU\setminus \u)^n$ is dense in $\UU^n$, we have that $\widetilde{f}(\UU^n)\subseteq \bar{A}$.  We may thus suppose that $\widetilde{f}^{-1}(\bar{A})\cap (\UU\setminus \u)^n=\emptyset$.  It follows that, for every $x=(x_1,\ldots,x_n)\in (\UU\setminus \u)^n$, there is $i\in \{1,\ldots,n\}$ such that $\tilde{f}(x)=x_i$.  Since $(\UU\setminus \u)^n$ is dense in $\UU^n$, it follows that:
\begin{equation}
\text{for each }x\in \UU^n\text{, there is }i\in \{1,\ldots,n\} \text{ such that }\tilde{f}(x)=x_i. \tag{$\dagger$}
\end{equation}

For $a\in \u^{n-1}$, consider the $Aa$-definable function $f_a:\u\to\u$ given by $f_a(b)=f(a,b)$.  Note that the natural extension $\widetilde{f_a}:\UU\to\UU$ satisfies $\widetilde{f_a}(b)=\tilde{f}(a,b)$ for all $b\in \UU$.  Since we know Theorem \ref{T:many} holds for one-variable functions, we have that either $\widetilde{f_a}=\id_\UU$ or $\widetilde{f_a}(\UU)\subseteq \overline{Aa}$.

\begin{lemma}\label{L:conn}
Let $Y=\{a\in \u^{n-1} \ | \ \widetilde{f_a}=\id_\UU\}$.  Then $Y=\emptyset$ or $\u$.
\end{lemma}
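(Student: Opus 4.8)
The plan is to show that $Y$ is both closed and open in $\u^{n-1}$, hence (since $\u^{n-1}$ is connected — indeed path-connected, being geodesic) equal to $\emptyset$ or all of $\u^{n-1}$. Actually, it will be cleaner to work inside $\UU^{n-1}$: set $\widetilde{Y}=\{a\in\UU^{n-1}\mid \widetilde{f_a}=\id_\UU\}$ and aim to show $\widetilde Y$ is clopen in $(\UU\setminus\u)^{n-1}$, which is path-connected, so that $\widetilde Y\cap(\UU\setminus\u)^{n-1}$ is $\emptyset$ or $(\UU\setminus\u)^{n-1}$; then density of $(\UU\setminus\u)^{n-1}$ in $\UU^{n-1}$ and continuity of $\tilde f$ transfer the dichotomy down to $Y$.

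The main work is the openness direction, and here is where I would use $(\dagger)$. Fix $a\in(\UU\setminus\u)^{n-1}$ with $\widetilde{f_a}=\id_\UU$, so $\tilde f(a,b)=b$ for all $b\in\UU$. I want a neighborhood of $a$ on which the same holds. By $(\dagger)$, for every $a'$ near $a$ and every $b\in\UU$ there is some index $i\in\{1,\dots,n\}$ with $\tilde f(a',b)=x_i$ (where $x=(a',b)$), i.e. either $\tilde f(a',b)=b$ or $\tilde f(a',b)=a'_i$ for some $i\le n-1$. The point is to rule out the second alternative persistently: pick $b\in\UU$ with $d(b,a'_i)>0$ for all $i$ (possible since $a'$ has only finitely many coordinates and $\UU$ is large), and in fact pick $b$ at distance bounded below from all the $a_i$'s and from $a$ itself. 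If $a'$ is within $\delta$ of $a$ — with $\delta$ chosen via the modulus $\Delta$ small relative to that lower bound — then $\tilde f(a',b)$ is within $\tfrac12$ of that bound of $\tilde f(a,b)=b$, so $\tilde f(a',b)\ne a'_i$ for any $i$, forcing $\tilde f(a',b)=b$. This shows $\widetilde{f_{a'}}$ agrees with $\id_\UU$ at the point $b$. To get $\widetilde{f_{a'}}=\id_\UU$ everywhere, note that $\widetilde{f_{a'}}$ is itself a one-variable $\overline{Aa'}$-definable function, so by the $n=1$ case it is either $\id_\UU$ or has range inside $\overline{Aa'}$; the latter is impossible once we know $\widetilde{f_{a'}}(b)=b\notin\overline{Aa'}$ for our chosen $b$ (choose $b$ outside $\overline{Aa'}$ as well, again possible by nonseparability of balls in $\UU$). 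Hence $\widetilde{f_{a'}}=\id_\UU$ and $a'\in\widetilde Y$.

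For closedness, suppose $a\in(\UU\setminus\u)^{n-1}$ is a limit of points $a^{(k)}\in\widetilde Y$. For each fixed $b\in\UU$ we have $\tilde f(a^{(k)},b)=b$, and since $(a^{(k)},b)\to(a,b)$ and $\tilde f$ is continuous, $\tilde f(a,b)=b$; as $b$ was arbitrary, $\widetilde{f_a}=\id_\UU$, so $a\in\widetilde Y$. Thus $\widetilde Y\cap(\UU\setminus\u)^{n-1}$ is clopen in the path-connected space $(\UU\setminus\u)^{n-1}$, hence empty or everything. If it is everything, then for every $b$ the set of $a$ with $\tilde f(a,b)=b$ contains the dense set $(\UU\setminus\u)^{n-1}$ and is closed, so equals $\UU^{n-1}$; in particular $Y=\u^{n-1}$. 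If it is empty, then for every $a\in(\UU\setminus\u)^{n-1}$ we have $\widetilde{f_a}\ne\id_\UU$; given $a\in\u^{n-1}$ we can approximate it by such $a'$ and use continuity to see $\widetilde{f_a}\ne\id_\UU$ for these as well — more precisely, if some $a\in\u^{n-1}$ had $\widetilde{f_a}=\id_\UU$, the openness argument above (which never used $a\notin\u$) would put a whole $\UU$-neighborhood of $a$, and in particular points of $(\UU\setminus\u)^{n-1}$, into $\widetilde Y$, a contradiction. So $Y=\emptyset$. The hardest point is the persistence argument in openness: making sure the "bad" alternative $\tilde f(a',b)=a'_i$ in $(\dagger)$ can be excluded uniformly for $a'$ in a fixed neighborhood of $a$, which is exactly what the judicious choice of $b$ (far from all coordinates of $a$ and outside $\overline{Aa}$) together with the modulus $\Delta$ accomplishes.
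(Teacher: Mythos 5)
Your proof is correct, but the openness step takes a genuinely different route from the paper's. The paper's argument does not use $(\dagger)$ at all: it fixes $\epsilon>0$ so that the closed $\epsilon$-neighborhood of $\u$ in $\UU$ is proper, sets $\delta=\Delta(\epsilon)$, and observes that if $a\in Y$ and $d(a,b)<\delta$ then $\widetilde{f_b}$ displaces every point of $\UU$ by at most $\epsilon$; since $\overline{Ab}\subseteq \u$, this is incompatible with $\widetilde{f_b}(\UU)\subseteq\overline{Ab}$, so the one-variable dichotomy forces $\widetilde{f_b}=\id_\UU$. You instead combine $(\dagger)$ with a single well-chosen witness $b$ (kept at a definite distance from $\bar{A}$ and from the coordinates of $a$) to get $\widetilde{f_{a'}}(b)=b$ for all $a'$ near $a$, and then invoke the one-variable dichotomy; this also works, and the witness-point idea is in the spirit of the arguments surrounding $(\dagger\dagger)$, but it is longer and uses more of the ambient setup. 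Two small remarks. First, the detour through $(\UU\setminus\u)^{n-1}$ is unnecessary: your openness argument, as you note, never uses $a\notin\u^{n-1}$, so you could simply show $Y$ is clopen in the connected space $\u^{n-1}$, which is what the paper does. Second, by routing through $a'\in(\UU\setminus\u)^{n-1}$ you apply the one-variable dichotomy to $\widetilde{f_{a'}}$ over the parameter set $Aa'$, which is no longer contained in $\u$; this is a harmless but unstated extension of the $n=1$ case as proved in the paper (the argument via Lemmas \ref{L:int} and \ref{L:sep} and the identity $\dcl(B)=\bar{B}$ for $B\subseteq\UU$ does go through verbatim for definable functions $\UU\to\UU$ over a countable parameter set in $\UU$, but you should say so explicitly).
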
  

\begin{proof}
Clearly $Y$ is a closed subset of $\u^{n-1}$.  We now show that $Y$ is also an open subset of $\u^{n-1}$, from which the lemma follows using the connectedness of $\u^{n-1}$.  Take $\epsilon>0$ such that the closed $\epsilon$-neighborhood of $\u$ in $\UU$ does not equal $\UU$; such an $\epsilon$ exists since $\UU$ is not separable.  Let $\delta=\Delta(\epsilon)$.  Suppose $a\in Y$ and $b\in \u^{n-1}$ is such that $d(a,b)<\delta$.  Then $$d(x,\widetilde{f_b}(x))=d(\tilde{f}(a,x),\tilde{f}(b,x))\leq \epsilon$$ for all $x\in \UU$.  This prevents $\widetilde{f_b}(\UU)\subseteq \overline{Ab}\subseteq \u$, whence $\widetilde{f_b}=\id_\UU$.     
\end{proof}

By the previous lemma, if there is $a\in \u^{n-1}$ such that $\widetilde{f_a}=\id_\UU$, then $f=\pi_n\upharpoonright \u^n$, whence $\tilde{f}=\pi_n$ and the proof of Theorem \ref{T:many} would be complete.  We may thus suppose that $\widetilde{f_a}(\UU)\subseteq \overline{Aa}$ for all $a\in \u^{n-1}$.  Now fix $a=(a_1,\ldots,a_{n-1})\in \u^{n-1}$ and $b\in \UU\setminus \u$.  We know that $\widetilde{f_a}(b)\in \overline{Aa}$ and, by ($\dagger$), $\tilde{f}(a,b)\in \{a_1,\ldots,a_{n-1},b\}$.  It follows that $\widetilde{f_a}(b)\in \{a_1,\ldots,a_{n-1}\}$.  Since $\UU\setminus \u$ is dense in $\UU$, it follows that $\widetilde{f_a}(\UU)\subseteq \{a_1,\ldots,a_{n-1}\}$.  Since $a\in \u^{n-1}$ is arbitrary, it follows that:
\begin{equation}
\text{ for each }a=(a_1,\ldots,a_{n-1})\in \u^{n-1}\text{, we have }\widetilde{f_a}(\UU)\subseteq \{a_1,\ldots,a_{n-1}\}. \tag{$\dagger \dagger$}
\end{equation}Now fix $b\in \u$ and consider the $Ab$-definable function $f^b:\u^{n-1}\to\u$ given by $f^b(a)=f(a,b)$.  By induction, either $f^b(\u^{n-1})$ is a relatively compact subset of $\overline{Ab}$ or else there is $i\in \{1,\ldots,n-1\}$ such that $f^b(a)=a_i$ for all $a=(a_1,\ldots,a_{n-1})\in \u^{n-1}$.  We cannot have the former option.  Indeed, if $f^b(\u^{n-1})$ were relatively compact, then there would be $c\in \u\setminus f^b(\u^{n-1})$.  But then $f(c,c,\ldots,c,b)=c$ by $(\dagger \dagger)$, a contradiction.  Thus there is (a unique) $i\in \{1,\ldots,n-1\}$ such that $f^b(a)=a_i$ for all $a=(a_1,\ldots,a_{n-1})\in \u^{n-1}$.  For $i=1,\ldots,n-1$, set $Y_i:=\{b\in \u \ | \ f^b(a)=a_i \text{ for all }a\in \u^{n-1}\}$.  We have thus shown that $\u$ is the disjoint union of the closed sets $Y_1,\ldots,Y_{n-1}$, whence by the connectedness of $\u$, there is $i\in \{1,\ldots,n-1\}$ such that $f(x)=x_i$ for all $x=(x_1,\ldots,x_n)\in \u^n.$  This finishes the proof of Theorem \ref{T:many} \qed

\begin{rmk}
Since compact sets are the analogue of finite sets in continuous logic, the first option in the conclusion of Theorem \ref{T:many} is almost like saying that $\tilde{f}$ is piecewise constant.  However, one must remember $\tilde{f}(\UU)$ is also connected, so if $\tilde{f}(\UU)$ is actually finite, then $\tilde{f}$ is a constant function.
\end{rmk}

\section{Consequences of the Main Result}

The main result on definable functions tells us that many functions are not definable.  For example:

\begin{cor}
If $f:\u\to\u$ is a definable surjective/open/proper map, then $f=\id_\u$.
\end{cor}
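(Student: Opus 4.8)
The plan is to invoke Theorem~\ref{T:many} in the case $n=1$ and then apply some elementary point-set topology of $\u$. For a definable $f\colon\u\to\u$, that theorem gives the dichotomy: either $\tilde f=\id_\UU$, in which case $f=\tilde f\upharpoonright\u=\id_\u$ and we are done, or $\tilde f(\UU)$ is a compact subset of $\bar A$ (hence of $\u$, since $A\subseteq\u$ and $\u$ is complete). So it suffices to show that this second alternative is incompatible with each of the hypotheses ``surjective,'' ``open,'' and ``proper.''

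So assume $K:=\tilde f(\UU)$ is compact. Since $\tilde f$ extends $f$, we have $f(\u)=\tilde f(\u)\subseteq K$; thus the range of $f$ is contained in a compact subset of $\u$. The only external input needed is that $\u$ is not compact and is nowhere locally compact; more precisely, that no ball $B_\u(x;r)$ with $r>0$ is totally bounded. This follows from ultrahomogeneity: for any $m<\omega$ one can isometrically embed an $m$-point equilateral metric space of diameter $\frac{1}{2}\min(r,1)$ into $B_\u(x;r)$ (sending one of its points to $x$), so $B_\u(x;r)$ contains $m$ points that are pairwise $\frac{1}{2}\min(r,1)$-separated. This is the one-variable form of the fact, already used in Section~2, that $\UU$ cannot be covered by finitely many balls of radius $\epsilon_0$.

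Now I dispose of the three cases. If $f$ is surjective, then $\u=f(\u)\subseteq K\subseteq\u$, so $\u=K$ is compact, a contradiction. If $f$ is open, then $f(\u)$ is a nonempty open subset of $\u$, hence contains some ball $B_\u(x;r)$ with $r>0$; but $f(\u)\subseteq K$ is totally bounded, so $B_\u(x;r)$ is totally bounded, contradicting the previous paragraph. If $f$ is proper, then, applying properness to the compact set $K\subseteq\u$ sitting in the codomain, $f^{-1}(K)$ is compact; but $f(\u)\subseteq K$ forces $f^{-1}(K)=\u$, so $\u$ is compact, again a contradiction. In every case the second alternative of Theorem~\ref{T:many} is ruled out, so $\tilde f=\id_\UU$ and $f=\id_\u$.

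I expect the only point requiring any real care to be the local non-compactness of $\u$ (equivalently, that its balls are never totally bounded); everything else is a direct application of Theorem~\ref{T:many} together with routine topology, and the same skeleton could presumably be pushed to give analogues in the codomain $\u^m$ if one wished.
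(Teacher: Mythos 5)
Your proposal is correct and follows essentially the same route as the paper: apply Theorem \ref{T:many} with $n=1$ and rule out the compact-range alternative using that $\u$ is not compact (for surjective and proper) and that relatively compact subsets of $\u$ have empty interior (for open). The paper states these three verifications as immediate; you have simply written out the details, including the ultrahomogeneity argument showing balls in $\u$ are not totally bounded, all of which is accurate.
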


\begin{proof}
The statements about definable surjective and proper maps are clear.  To see why a definable open map must necessarily be the identity, it remains to observe that relatively compact subsets of $\u$ have empty interior.
\end{proof}

\begin{cor}
If $f:\u\to\u$ is a definable isometric embedding, then $f=\id_\u$.
\end{cor}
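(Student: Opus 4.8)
The plan is to deduce this from Theorem~\ref{T:many} together with the preceding corollary. Suppose $f\colon \u\to\u$ is a definable isometric embedding. Since $f$ is an isometry onto its image, it is in particular injective, and I want to rule out the possibility that $\tilde f(\UU)$ is a compact subset of $\bar A$, leaving only the option $\tilde f=\pi_1=\id_\UU$, hence $f=\id_\u$.

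So the heart of the argument is: an isometric embedding $f\colon\u\to\u$ cannot have relatively compact image. First I would note that $f$, being an isometric embedding, has a natural extension $\tilde f\colon\UU\to\UU$ which is \emph{also} an isometric embedding; indeed, the graph of $\tilde f$ is the zeroset of the uniform limit of the $\varphi_k^\UU$, and the condition $d(\tilde f(x),\tilde f(y))=d(x,y)$ for all $x,y$ is a closed condition that holds on the dense (in fact elementary) substructure $\u$, so it holds on all of $\UU$ by continuity. Now if $\tilde f(\UU)\subseteq\bar A$ with $\bar A$ compact, then $\UU$ embeds isometrically into a compact metric space, so $\UU$ itself is totally bounded; but $\UU$, being $\omega_1$-saturated, is not even separable (as noted in the introduction, no closed ball in $\UU$ is separable), a contradiction. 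Alternatively, and staying closer to the hypothesis on $f$ itself: $f$ is an isometric embedding of the non-separable... — wait, $\u$ \emph{is} separable. Better to run the argument at the level of $\UU$: an isometric embedding of $\UU$ into the relatively compact set $\tilde f(\UU)$ forces $\UU$ to be separable, contradicting $\omega_1$-saturation.

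The main (and essentially only) obstacle is verifying that $\tilde f$ is itself an isometric embedding, i.e. that the isometry property transfers from $f$ on $\u$ to $\tilde f$ on $\UU$. This is routine given the setup: for fixed $x,y\in\UU$, the function $(u,v)\mapsto |d(\tilde f(u),\tilde f(v))-d(u,v)|$ is continuous, vanishes on the dense set $\u^2$, hence vanishes everywhere; alternatively one appeals to the fact that $\tilde f$ is $A$-definable and $\u\prec\UU$ to transfer the (approximable) sentence asserting that $f$ is an isometric embedding. Once this is in hand, case~(1) of Theorem~\ref{T:many} is excluded by non-separability of $\UU$, so case~(2) holds and $\tilde f=\pi_1$, giving $f=\tilde f\restriction\u=\id_\u$. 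I would present this in about a half page, with the transfer of the isometry property as the one point needing a sentence or two of justification.
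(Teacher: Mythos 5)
Your overall strategy is right and the conclusion follows, but two points deserve comment. First, one of your two justifications for transferring the isometry property from $f$ to $\tilde{f}$ is wrong: $\u$ is \emph{not} dense in $\UU$ --- it is a closed, separable subset of a non-separable space, and the paper explicitly notes that $\UU\setminus\u$ is dense in $\UU$ --- so the ``vanishes on a dense set'' argument fails. Your alternative justification via elementarity does work: $d(\tilde{f}(x),\tilde{f}(y))$ is a definable predicate over $A$, so the closed condition $\sup_{x,y}|d(\tilde{f}(x),\tilde{f}(y))-d(x,y)|=0$ transfers from $\u$ to $\UU$ since $\u\preceq\UU$. Second, the detour through $\UU$ is unnecessary, and your parenthetical worry (``wait, $\u$ \emph{is} separable'') points at the simpler route you abandoned: the obstruction to sitting inside a compact set is failure of \emph{total boundedness}, not of separability. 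If $\tilde{f}(\UU)$ were compact, then $f(\u)\subseteq\tilde{f}(\UU)$ would be totally bounded; but $f(\u)$ is isometric to $\u$, which by universality contains infinitely many points pairwise at distance $1$ and hence is not totally bounded. This rules out case (1) of Theorem \ref{T:many} directly from the hypothesis on $f$, with no need to show that $\tilde{f}$ is an isometric embedding. The paper states the corollary without proof, and this direct argument is surely the intended one; your version is correct (reading only the elementarity justification) but does strictly more work.
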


We now expound on the significance of the previous corollary.  There are many interesting isometric embeddings $\u\to\u$.  For example, any isometry $\phi$ between compact subsets $C$ and $C'$ of $\u$ can be extended to an isometry $\Phi$ of $\u$.  The previous corollary shows that if $\phi\not=\id_C$, then $\Phi$ is not definable.  

Due to the homogeneity of $\u$, there are many proper subsets of $\u$ isometric to $\u$.  For example, given any $x_1,\ldots,x_n\in \u$, the set
$$\operatorname{Med}(x_1,\ldots,x_n)=\{z\in \u \ | \ d(z,x_i)=d(z,x_j) \text{ for all }i,j=1,\ldots,n\}$$ is isometric to $\u$.  Also, if $M$ is any Polish subspace of $\u$ which is a \emph{Heine-Borel} metric space (i.e. closed balls in $M$ are compact) and $R\in (0,1]$, then $$\{x\in \u \ | \ d(x,M)\geq R\}$$ is isometric to $\u$.  (See \cite{Julien} for proofs of all of the results mentioned in this paragraph.)  The previous corollary shows that if $Z$ is a proper subspace of $\u$ which is isometric to $\u$, then any isometry $\Phi:\u\to Z$ is not definable.  

On a related note, by the universality of $\u$, for each $n\geq 1$, there is an isometric embedding $\u^n\to\u$.  It was proven in Corollary 5.16(1) of \cite{Gold} that, for all $n\geq 2$, there are no \emph{definable} isometric embeddings $\u^n\to\u$.

Here is one more corollary of Theorem \ref{T:many} with a topological flavor.

\begin{cor}
If $f:\u\to\u$ is a definable contraction mapping, then $f(\u)$ is a relatively compact subset of $\bar{A}$.
\end{cor}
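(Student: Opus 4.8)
The plan is to apply Theorem~\ref{T:many} directly: since $f\colon\u\to\u$ is a definable contraction mapping, it is in particular an $A$-definable function for some countable $A\subseteq\u$, so either $\tilde f(\UU)$ is a compact subset of $\bar A$ (in which case $f(\u)=\tilde f(\UU)\cap\u$ is relatively compact in $\bar A$ and we are done), or $\tilde f=\pi_1=\id_\UU$. So the entire content of the proof is to rule out the possibility $f=\id_\u$, and this is immediate: the identity map is not a contraction mapping, since a contraction requires $d(f(x),f(y))\leq\lambda\, d(x,y)$ for some fixed $\lambda<1$, which fails for $\id_\u$ as soon as one picks two distinct points of $\u$ (and $\u$, having diameter $1>0$, certainly has distinct points).

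Concretely, I would write: by Theorem~\ref{T:many}, either $\tilde f(\UU)$ is a compact subset of $\bar A$ or $\tilde f=\id_\UU$. In the first case, $f(\u)\subseteq\tilde f(\UU)\subseteq\bar A$, and $f(\u)$ is relatively compact since it is a subset of the compact set $\tilde f(\UU)$. In the second case $f=\id_\u$, contradicting that $f$ is a contraction mapping (a contraction with ratio $\lambda<1$ would force $d(x,y)=d(f(x),f(y))\leq\lambda\,d(x,y)$ for all $x,y\in\u$, impossible when $x\neq y$). Hence the first case holds, proving the corollary.

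There is essentially no obstacle here; this is a one-line deduction from the main theorem together with the elementary observation that the identity is not a contraction. The only thing worth being slightly careful about is the precise definition of ``contraction mapping'' being used — I am assuming the standard Banach-fixed-point sense (Lipschitz constant strictly less than $1$), under which the argument is trivial. If instead one only assumed the weaker ``$d(f(x),f(y))<d(x,y)$ for all $x\neq y$'' (a weak contraction), the same conclusion still follows, since $\id_\u$ fails even this weaker inequality; so the corollary is robust to which convention one adopts.
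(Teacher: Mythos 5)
Your proof is correct and is exactly the intended argument: the paper states this corollary without proof as an immediate consequence of Theorem~\ref{T:many}, since for $n=1$ the only projection is the identity, which is not a contraction. Your extra care about which notion of ``contraction'' is in play is harmless but unnecessary, as the identity fails even the weak version.
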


The structure theorem for definable functions also gives us information on definable groups.

\begin{cor}
There are no definable group operations on $\u$.
\end{cor}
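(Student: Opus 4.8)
The plan is to derive a contradiction from the existence of a definable group operation $\ast \colon \u^2 \to \u$ by combining Theorem \ref{T:many} with the group axioms. First I would apply Theorem \ref{T:many} to the $A$-definable function $\ast$ (for a suitable countable $A \subseteq \u$): either $\widetilde{\ast}(\UU^2)$ is a compact subset of $\bar A$, or $\widetilde{\ast} = \pi_i$ for some $i \in \{1,2\}$. I would handle these two cases separately and rule each one out.

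In the projection case, say $\widetilde{\ast} = \pi_1$, we have $x \ast y = x$ for all $x, y \in \u$. Then every element of $\u$ would be a left identity, but a group has a unique (two-sided) identity, so $\u$ would be a singleton — absurd, since $\u$ has diameter $1$. The case $\widetilde{\ast} = \pi_2$ is symmetric ($x \ast y = y$ forces every element to be a right identity). In the compact-range case, the operation $\ast$ maps all of $\u^2$ into a relatively compact set $K := \overline{\ast(\u^2)} \subseteq \bar A$; since the inclusion $\u \hookrightarrow \u^2$, $x \mapsto (x, e)$ (where $e$ is the identity of the group) composed with $\ast$ is the identity map $\u \to \u$, we would get $\u = \{x \ast e : x \in \u\} \subseteq K$, making $\u$ itself relatively compact. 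But $\u$ is not compact (equivalently, not totally bounded — it cannot be covered by finitely many balls of radius $\epsilon_0$), a contradiction.

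The main subtlety, and the step I would be most careful about, is the definition of "definable group operation" and the choice of parameter set $A$: I want $A$ to contain the group identity $e$ (and this is automatic, since we may enlarge $A$ by a single point, keeping it countable) so that the map $x \mapsto x \ast e$ is $A$-definable and hence its natural extension is the restriction of $\widetilde{\ast}$ to the $A$-definable set $\u \times \{e\}$. Actually the cleanest route is to note that $x \mapsto x \ast e$ is an $A$-definable function $\u \to \u$ equal to $\id_\u$, so there is nothing to prove there; the real content is just that $\u$ not being relatively compact rules out the compact case, and the group axioms rule out the projection case. Everything else is routine bookkeeping.
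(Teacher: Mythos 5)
Your proof is correct and is essentially the paper's own argument spelled out in detail: the paper's one-line proof likewise observes that a group operation is surjective (which rules out the compact-range alternative) but cannot equal a projection (which rules out the other alternative of Theorem \ref{T:many}). The only nitpick is that in the case $\widetilde{\ast}=\pi_1$, i.e.\ $x\ast y=x$, every element of $\u$ is a \emph{right} identity rather than a left one, but this slip does not affect the argument.
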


\begin{proof}
Indeed, a group operation is a function of two variables which is surjective but not equal to a projection function.
\end{proof}

In particular, the group operation on $\u$ introduced by Vershik and Cameron which makes $\u$ into a monothetic Polish group (see \cite{CV}) is not definable.

Our next series of corollaries involve functions on $\u$ induced by functions on $\ell^2$.  In \cite{Usp}, Uspenskij proved that $\u$ is homeomorphic to $\ell^2$.  Let $\Phi:\ell^2\to\u$ be a homeomorphism.  Suppose $F:\ell^2\to\ell^2$ is a function.  Define $f:\u\to\u$ by $f(x)=\Phi(F(\Phi^{-1}(x)))$ and call $f$ the function \emph{induced by} $F$.

\begin{cor}
Suppose that $F\co\ell^2\to\ell^2$ is a continuous, linear surjection.  Suppose that the induced map is definable.  Then $F=\id_{\ell^2}$.
\end{cor}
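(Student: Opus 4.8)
The plan is to combine the previous corollary (a definable isometric embedding of $\u$ must be the identity) with the structure theorem, reducing everything to linear-algebraic and topological facts about $\ell^2$. First I would invoke Theorem~\ref{T:many}: the induced map $f\co\u\to\u$ is definable, hence either $\tilde f$ has relatively compact range or $\tilde f=\id_\UU$ (the only projection in one variable). I would then argue that the first alternative is impossible. If $f(\u)$ were relatively compact, then, transporting through the homeomorphism $\Phi$, the set $F(\ell^2)=\Phi^{-1}(f(\u))$ would be a relatively compact subset of $\ell^2$; but $F$ is a continuous linear surjection of $\ell^2$ onto itself, so its image is all of $\ell^2$, which is not relatively compact (being an infinite-dimensional normed space). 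Hence $\tilde f=\id_\UU$, and in particular $f=\id_\u$.

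Next I would translate $f=\id_\u$ back through $\Phi$. Since $f(x)=\Phi(F(\Phi^{-1}(x)))$ for all $x\in\u$ and $f=\id_\u$, applying $\Phi^{-1}$ on the left and $\Phi$ on the right gives $F(\Phi^{-1}(x))=\Phi^{-1}(x)$ for all $x\in\u$; as $\Phi^{-1}$ is a bijection onto $\ell^2$, this says exactly that $F(v)=v$ for every $v\in\ell^2$, i.e. $F=\id_{\ell^2}$.

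The step I expect to require the most care is ruling out the relatively-compact alternative — specifically, making sure the compactness obstruction transfers cleanly across the merely topological homeomorphism $\Phi$. Relative compactness is a topological property, so $f(\u)$ relatively compact in $\u$ forces $\Phi^{-1}(f(\u))$ relatively compact in $\ell^2$ (the closure in $\ell^2$ is the continuous image under $\Phi^{-1}$ of the closure in $\u$, which is compact); one should note that this uses $\Phi^{-1}$ continuous and that compact sets map to compact sets. Once that is in place, the contradiction with $F(\ell^2)=\ell^2$ and the non-compactness of the unit ball (Riesz's lemma) in the infinite-dimensional Hilbert space $\ell^2$ is immediate, and the rest is bookkeeping. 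Note that continuity of $F$ is used only to guarantee that the induced $f$ is even a candidate; the actual leverage comes from surjectivity plus the structure theorem.
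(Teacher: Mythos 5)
Your proof is correct, but it takes a different and somewhat more elementary route than the paper. The paper's proof is a one-liner: by the Open Mapping Theorem, the continuous linear surjection $F$ is an open map, hence the induced map $f=\Phi\circ F\circ\Phi^{-1}$ is an open map of $\u$, and the first corollary of this section (a definable open map $\u\to\u$ is the identity, because relatively compact subsets of $\u$ have empty interior) finishes the argument. You instead transport \emph{surjectivity} rather than openness across the homeomorphism $\Phi$: since $F$ is onto, $f(\u)=\Phi(F(\ell^2))=\Phi(\ell^2)=\u$, which is not relatively compact, so the structure theorem leaves only $\tilde f=\id_\UU$. This is exactly the ``surjective'' case of that same earlier corollary, which you could have cited directly instead of re-deriving it; in fact, once you observe that $f$ itself is surjective onto $\u$ (which is not totally bounded), the detour through $\ell^2$ and Riesz's lemma is unnecessary. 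What your version buys is the complete avoidance of the Open Mapping Theorem --- continuity of $F$ plays no role in your argument, only surjectivity --- while the paper's version leans on the ``open'' rather than the ``surjective'' clause of the earlier corollary. Both hinge on the same structure theorem, and your handling of the transfer of relative compactness through the homeomorphism $\Phi$ is correct.
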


\begin{proof}
By the Open Mapping Theorem, $F$, and consequently the induced map, would be an open map.
\end{proof}

In particular, the function on $\u$ induced by multiplication by a fixed scalar is not definable.  The next corollary is proven in a similar way.

\begin{cor}
If $x\in \ell^2$, then translation by $x$ does not induce a definable function on $\u$.
\end{cor}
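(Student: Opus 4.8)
The plan is to reduce this to the previous corollary about open maps induced by continuous linear surjections, exactly as the paper signals by saying ``The next corollary is proven in a similar way.'' The key observation is that translation by a fixed vector $x\in\ell^2$ is a homeomorphism of $\ell^2$ onto itself, in particular a continuous open surjection; hence the function $f\co\u\to\u$ induced by translation by $x$, namely $f=\Phi\circ T_x\circ\Phi^{-1}$ where $T_x(v)=v+x$, is a composition of homeomorphisms and therefore an open map of $\u$ onto $\u$.

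First I would note that if the induced map $f$ were definable, then by the Corollary on definable open (equivalently surjective or proper) maps $\u\to\u$, we would have $f=\id_\u$. Concretely: $f$ is a homeomorphism of $\u$, so it is surjective, so by Theorem~\ref{T:many} its natural extension $\tilde f$ is not contained in $\bar A$ (a relatively compact, hence non-surjective onto $\UU$, set), forcing $\tilde f=\id_\UU$ and thus $f=\id_\u$. Then I would transfer this back through the homeomorphism $\Phi$: $f=\id_\u$ means $\Phi\circ T_x\circ\Phi^{-1}=\id_\u$, which after composing with $\Phi^{-1}$ on the left and $\Phi$ on the right gives $T_x=\id_{\ell^2}$, i.e.\ $v+x=v$ for all $v\in\ell^2$, so $x=0$. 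Hence translation by a nonzero $x$ cannot induce a definable function, which is the content of the corollary (and for $x=0$ the induced map is the identity, which is trivially definable but uninteresting).

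I do not anticipate any genuine obstacle here; the only point requiring a word of care is making sure that ``open'' (or ``surjective,'' or ``proper'') is the right hypothesis to invoke — translation is all three, so any of the three branches of the earlier corollary applies. If one prefers to avoid appealing to the open-mapping corollary, one can argue directly: a definable $f\co\u\to\u$ that is surjective cannot have relatively compact range (since $\u$ itself is not compact, having positive $\epsilon_0$-separation), so by Theorem~\ref{T:many} we get $\tilde f=\id_\UU$, hence $f=\id_\u$, and then deduce $x=0$ as above. Either way the proof is a one-line reduction and I would present it as such.

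\begin{proof}
Translation by $x$ is a homeomorphism of $\ell^2$; hence the map it induces on $\u$ is a homeomorphism of $\u$, and in particular a surjection. If it were definable, the Corollary on definable surjective maps $\u\to\u$ would force it to equal $\id_\u$. But then $\Phi\circ T_x\circ\Phi^{-1}=\id_\u$, where $T_x\co\ell^2\to\ell^2$ is translation by $x$; composing with $\Phi^{-1}$ and $\Phi$ gives $T_x=\id_{\ell^2}$, so $x=0$.
\end{proof}
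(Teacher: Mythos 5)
Your proof is correct and follows exactly the route the paper intends when it says the corollary ``is proven in a similar way'': translation is a homeomorphism of $\ell^2$, so the induced map is a surjective (indeed open) self-map of $\u$, which if definable would be $\id_\u$ by the earlier corollary, forcing $x=0$. Your parenthetical observation that the statement should implicitly exclude $x=0$ (where the induced map is the identity and hence definable) is a fair and worthwhile clarification, but otherwise this matches the paper's argument.
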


Recently, Melleray \cite{Julien2} proved that $\operatorname{Iso}(\u)$ is homeomorphic to $\ell^2$, and hence, combined with Uspenskij's result refererred to above, $\iso(\u)$ is homeomorphic to $\u$.  Let $\Psi:\iso(\u)\to\u$ be a homeomorphism.  We can thus view the evaluation map $(\phi,x)\mapsto \phi(x)\co \iso(\u)\times \u \to \u$ as a map $(\Psi(\phi),x)\mapsto \phi(x) \co \u^2\to \u$.  Let us call the latter map the \emph{induced evaluation map}.

\begin{cor}
The induced evaluation map is never definable.
\end{cor}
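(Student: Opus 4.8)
The plan is to apply Theorem~\ref{T:many} to the induced evaluation map and rule out every possibility in its conclusion. Write $e\co\u^2\to\u$ for the induced evaluation map, so that $e(\Psi(\phi),x)=\phi(x)$ for every $\phi\in\iso(\u)$ and $x\in\u$; this is well defined since $\Psi$ is a bijection. Assume toward a contradiction that $e$ is $A$-definable for some countable $A\subseteq\u$. By Theorem~\ref{T:many}, either $(1)$ $\tilde e(\UU^2)$ is a compact subset of $\bar A$, or $(2)$ $\tilde e=\pi_i$ for some $i\in\{1,2\}$. Throughout I will use the fact that $\tilde e\!\upharpoonright\!\u^2=e$, so that an identity $\tilde e=\pi_i$ would force $e=\pi_i\!\upharpoonright\!\u^2$.

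To rule out $(1)$: the map $e$ is surjective, since $e(\Psi(\id_\u),y)=y$ for every $y\in\u$, and hence $\tilde e(\UU^2)\supseteq e(\u^2)=\u$. But $\u$ is not compact --- by universality it contains, for each $k$, a set of $k$ points pairwise at distance $1$, so it is not totally bounded --- while $\tilde e(\UU^2)$ is closed in $\UU$ and, under $(1)$, would be compact; a compact subset of $\UU$ cannot contain the closed, non-compact set $\u$, since a closed subset of a compact space is compact. To rule out $\tilde e=\pi_1$: restricting to $\u^2$ gives $\phi(x)=\Psi(\phi)$ for all $\phi\in\iso(\u)$ and all $x\in\u$; taking $\phi=\id_\u$ yields $x=\Psi(\id_\u)$ for every $x\in\u$, which is absurd since $\u$ has more than one point. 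To rule out $\tilde e=\pi_2$: restricting to $\u^2$ gives $\phi(x)=x$ for all $\phi\in\iso(\u)$ and $x\in\u$, i.e.\ $\iso(\u)=\{\id_\u\}$; but ultrahomogeneity provides, for any two distinct points of $\u$, an isometry of $\u$ carrying one to the other, so $\iso(\u)$ is nontrivial, a contradiction. Since all alternatives of Theorem~\ref{T:many} fail, $e$ is not definable.

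I do not expect a genuine obstacle: the entire content is packaged in Theorem~\ref{T:many}, and the verifications above are elementary. The only point requiring a little care is that Theorem~\ref{T:many} is stated for the natural extension $\tilde e$ on $\UU^2$ rather than for $e$ on $\u^2$; but since $\tilde e$ restricts to $e$, and since both the surjectivity of $e$ and the failure of each projection identity are already witnessed inside $\u^2$ itself, this transfer is immediate.
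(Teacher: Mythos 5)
Your proof is correct and follows essentially the same route as the paper's: surjectivity of the evaluation map rules out the compact-range alternative of Theorem \ref{T:many}, and each projection is excluded by unwinding what it would say about $\iso(\u)$ (the paper excludes $\pi_1$ by noting a constant $\phi$ contradicts injectivity, where you specialize to $\phi=\id_\u$; both work). Your extra detail on why $\u$ is non-compact and why $\tilde e$ restricts to $e$ is fine but not a different argument.
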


\begin{proof}
Since the induced evaluation map is surjective, we have that the induced evaluation map is either $(\Psi(\phi),x)\mapsto \Psi(\phi)$ or $(\Psi(\phi),x)\mapsto x$.  The former option implies that $\Psi(\phi)=\phi(x)$ for all $x\in \u$, contradicting the injectivity of $\phi$, while the latter option implies that $\phi=\id_\u$ for all $\phi\in \iso(\u)$, again a contradiction.
\end{proof}

We end this section with two results concerning the case that all elements of the defining parameterset $A$ of $f$ are fixed by $f$.

\begin{cor}
Suppose that $f\co\u\to\u$ is $A$-definable, $f(a)=a$ for all $a\in A$, and $\bar{A}$ is not compact.  Then $f=\id_\u$.
\end{cor}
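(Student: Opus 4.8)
The plan is to apply Theorem~\ref{T:many} in the case $n=1$ and rule out the compact-range alternative using the fixed-point hypothesis together with the non-compactness of $\bar{A}$. First I would invoke Theorem~\ref{T:many}: since $f\co\u\to\u$ is $A$-definable, either $\tilde{f}(\UU)$ is a compact subset of $\bar{A}$, or $\tilde{f}=\pi_1=\id_\UU$, in which case $f=\id_\u$ and we are done. So it suffices to derive a contradiction from the assumption that $\tilde{f}(\UU)$ is compact.

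The key observation is that $f(a)=a$ for all $a\in A$ passes to the closure: since $f$ is continuous and $A$ is dense in $\bar{A}$, we have $f(a)=a$ for all $a\in \bar{A}$, i.e. $\bar{A}\subseteq X$ where $X=\{x\in\u\mid f(x)=x\}$. (Working inside $\UU$, one similarly gets $\bar{A}\subseteq\{x\in\UU\mid\tilde{f}(x)=x\}$ using that $\tilde{f}$ extends $f$ and is continuous, but the statement only concerns $f$ on $\u$, so the version over $\u$ suffices.) Now if $\tilde{f}(\UU)$ is compact, then in particular $f(\u)\subseteq\tilde{f}(\UU)$ is a relatively compact subset of $\u$; but $\bar{A}\subseteq f(\u)$ since every element of $\bar{A}$ is a fixed point of $f$ and hence lies in the range of $f$. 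Therefore $\bar{A}$ is contained in a relatively compact subset of $\u$, and being closed it is itself compact --- contradicting the hypothesis that $\bar{A}$ is not compact. Hence the compact-range alternative is impossible, and $f=\id_\u$.

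I do not expect any serious obstacle here; the only point requiring a small amount of care is the transfer of the fixed-point property from $A$ to $\bar{A}$ via continuity and density, and the observation that a closed subset of a relatively compact set is compact. Both are routine. One could alternatively phrase the contradiction as: $\tilde{f}(\UU)\subseteq\bar{A}$ would force $\bar{A}$ to be compact by the lemma preceding Theorem~\ref{T:many} only in the direction we already have, so the cleanest route is simply the inclusion $\bar{A}\subseteq f(\u)\subseteq\overline{f(\u)}\subseteq\tilde{f}(\UU)$ followed by compactness.
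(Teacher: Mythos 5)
Your proof is correct and is exactly the argument the paper intends (the corollary is stated there without proof, as an immediate consequence of Theorem~\ref{T:many}): the identity alternative gives the conclusion, and in the compact-range alternative the fixed points force $A\subseteq f(\u)\subseteq\tilde{f}(\UU)$, so $\bar{A}$ is a closed subset of a compact set and hence compact, a contradiction. The detour through extending the fixed-point property to $\bar{A}$ by continuity is harmless but not even needed, since $\tilde{f}(\UU)$ is already closed.
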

 
\begin{cor}
Suppose that $f\co\u\to\u$ is $A$-definable, $f(a)=a$ for all $a\in A$, and there is some separable metric space $B$ containing $\bar{A}$ as a closed subspace such that $\bar{A}$ is not a retract of $B$.  Then $f=\id_\u$. 
\end{cor}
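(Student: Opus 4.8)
The plan is to apply the $n=1$ case of Theorem~\ref{T:many} and to show that its ``relatively compact range'' alternative would produce a retraction of $B$ onto $\bar{A}$, contrary to hypothesis.

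First I will record what $f(a)=a$ for all $a\in A$ buys us. Since $\tilde{f}$ extends $f$ and is continuous, $\tilde{f}\upharpoonright\bar{A}=\id_{\bar{A}}$. By Theorem~\ref{T:many} with $n=1$ (where $\pi_1=\id_\UU$), either $\tilde{f}=\id_\UU$, in which case $f=\id_\u$ and we are finished, or $\tilde{f}(\UU)$ is a compact subset of $\bar{A}$; assume the latter. As $\bar{A}\subseteq\u\subseteq\UU$ and $\tilde{f}\upharpoonright\bar{A}=\id_{\bar{A}}$, we get $\bar{A}=\tilde{f}(\bar{A})\subseteq\tilde{f}(\UU)\subseteq\bar{A}$, so $\tilde{f}(\UU)=\bar{A}$; in particular $\bar{A}$ is a nonempty compact space, and $f=\tilde{f}\upharpoonright\u$ is a continuous retraction of $\u$ onto $\bar{A}$.

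Next I will massage $B$ into a space to which universality of $\u$ applies. Replacing the metric $d_B$ by $\min(d_B,1)$, which is again a metric inducing the same topology and which restricts to the original metric on $\bar{A}$ (as $\bar{A}\subseteq\u$ has diameter at most $1$), I may assume $B$ has diameter at most $1$. Passing to the completion $\hat{B}$, I obtain a Polish metric space of diameter at most $1$ containing $\bar{A}$, which is complete (being closed in $\u$) and hence a closed subspace of $\hat{B}$; moreover $\bar{A}$ is still not a retract of $\hat{B}$, since any retraction $\hat{B}\to\bar{A}$ restricts to one $B\to\bar{A}$. By universality of $\u$ there is an isometric embedding $j\colon\hat{B}\to\u$. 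Now $j(\bar{A})$ and $\bar{A}$ are isometric compact subsets of $\u$, so by the homogeneity of $\u$ with respect to compact subsets (the fact recalled in Section~3, with proof in \cite{Julien}) the isometry $(j\upharpoonright\bar{A})^{-1}\colon j(\bar{A})\to\bar{A}$ extends to an isometry $\Phi$ of $\u$; hence $g:=\Phi\circ j\colon\hat{B}\to\u$ is an isometric embedding with $g\upharpoonright\bar{A}=\id_{\bar{A}}$. Then $f\circ g\colon\hat{B}\to\bar{A}$ is continuous and fixes $\bar{A}$ pointwise, so $(f\circ g)\upharpoonright B$ is a retraction of $B$ onto $\bar{A}$---a contradiction. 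Thus the compact alternative cannot occur, so $\tilde{f}=\id_\UU$ and $f=\id_\u$.

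The only step that is not bookkeeping is the transfer from a retraction of $\u$ onto $\bar{A}$, which is all Theorem~\ref{T:many} directly gives, to a retraction of the prescribed ambient space $B$; this is exactly where compactness of $\bar{A}$ is used, so that the embedding of $\hat{B}$ into $\u$ coming from universality can be post-composed with an automorphism of $\u$ that straightens it out over $\bar{A}$. (One could instead invoke that a retract of $\u$, which is homeomorphic to $\ell^2$ by \cite{Usp}, is an absolute retract, hence a retract of any metric space containing it as a closed subspace; the route above has the advantage of using only facts already employed in this paper.)
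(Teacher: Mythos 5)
Your proof is correct and follows essentially the same route as the paper's: realize $B$ inside $\u$ via universality and observe that the compact-range alternative of Theorem~\ref{T:many} would make $f$ a retraction of $\u$ (hence of $B$) onto $\bar{A}$. You merely fill in details the paper leaves implicit, namely the rescaling/completion of $B$ needed to apply universality and the use of compact homogeneity to arrange that the embedded copy of $\bar{A}$ coincides with $\bar{A}$ itself.
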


\begin{proof}
By universality of $\u$, we can realize $B$ inside of $\u$.  If $f\not=\id_\u$, then $f(\u)=\bar{A}$, whence $\bar{A}$ is a retract of $\u$, and hence of $B$.
\end{proof}

As a specific instance of the previous corollary, we see that if $f$ is $A$-definable, where $\bar{A}=S^n$, the $n$-dimensional sphere, and $f(a)=a$ for all $a\in A$, then $f=\id_\u$.  Indeed, $S^n$ is not a retract of the closed $(n+1)$-dimensional unit ball $B^{n+1}$.

\section{The Case of Compact Range}

In this section, we assume that $f:\u^n\to \u$ is an $A$-definable function and $\tilde{f}(\UU^n)$ is a compact subset of $\bar{A}$.  The goal of this section is to obtain further topological information about $\tilde{f}(\UU^n)$.  Since compact subsets of $\UU$ have empty interior, we immediately have the following:

\begin{lemma}
$\tilde{f}(\UU^n)$ has no interior in $\UU$.
\end{lemma}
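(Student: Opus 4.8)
The plan is to reduce this to the already-established fact (used earlier in the paper) that compact subsets of $\UU$ have empty interior, so the only real work is verifying that hypothesis applies here. First I would recall that by assumption $\tilde f(\UU^n)$ is a compact subset of $\bar A\subseteq\u\subseteq\UU$. Since $\UU$ is $\omega_1$-saturated, every closed ball in $\UU$ (in particular every nonempty open ball, which contains a closed ball of smaller radius centered at the same point) fails to be separable. A compact metric space is separable, so a compact subset of $\UU$ cannot contain any nonempty open ball of $\UU$; equivalently, it has empty interior in $\UU$.

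Concretely, I would argue by contradiction: suppose $\tilde f(\UU^n)$ has nonempty interior in $\UU$, so there are $x\in\UU$ and $r>0$ with $B_\UU(x;r)\subseteq\tilde f(\UU^n)$. Then the closed ball $\bar B_\UU(x;r/2)$ is a closed subset of the compact set $\tilde f(\UU^n)$, hence compact, hence separable. But $\omega_1$-saturation of $\UU$ forces $\bar B_\UU(x;r/2)$ to be non-separable (this is exactly the observation made in the Notations section of the paper), a contradiction. Therefore $\tilde f(\UU^n)$ has empty interior in $\UU$, as claimed.

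There is essentially no obstacle here: the statement is an immediate consequence of the compactness hypothesis together with the non-separability of balls in $\UU$ under $\omega_1$-saturation, both of which are available. The only point requiring a word of care is passing from "no open ball is contained in the set" to "empty interior," but that is just the definition of interior, and the reduction to a closed ball (to invoke the stated non-separability, which is phrased for closed balls) is routine.
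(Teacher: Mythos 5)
Your proposal is correct and matches the paper's reasoning: the paper states the lemma as an immediate consequence of the fact that compact subsets of $\UU$ have empty interior, which is exactly your argument (you simply also spell out why that fact holds, via the non-separability of closed balls under $\omega_1$-saturation). Nothing further is needed.
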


Since $\UU^n$ is path-connected and the continuous image of a path-connected space is path-connected, we get

\begin{lemma}
$\tilde{f}(\UU^n)$ is path-connected.
\end{lemma}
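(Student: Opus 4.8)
The statement to prove is that $\tilde{f}(\UU^n)$ is path-connected, under the standing assumption of Section~4 that $\tilde{f}(\UU^n)$ is a compact subset of $\bar{A}$. The plan is to deduce this immediately from the path-connectedness of the domain $\UU^n$ together with the continuity of $\tilde{f}$. First I would recall that $\UU^n$ is path-connected: for $n=1$ this is exactly the consequence of Lemma~\ref{L:pc} noted in the text (taking the union of balls to be empty, so that $\UU\setminus B=\UU$, which is finitely injective and hence path-connected via Sections~3--4 of \cite{Julien}), and for general $n$ one applies the product structure or directly the second lemma of Section~2 with an empty union of balls to see that $\UU^n$ is path-connected. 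Alternatively, path-connectedness of $\UU^n$ is already implicitly used in Section~4 (the excerpt says ``Since $\UU^n$ is path-connected\ldots''), so I may simply cite that fact.

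Second, I would invoke the standard topological fact that the continuous image of a path-connected space is path-connected: given $p,q\in \tilde{f}(\UU^n)$, choose $x,y\in\UU^n$ with $\tilde{f}(x)=p$ and $\tilde{f}(y)=q$, take a continuous path $\gamma:[0,1]\to\UU^n$ from $x$ to $y$, and observe that $\tilde{f}\circ\gamma:[0,1]\to\tilde{f}(\UU^n)$ is a continuous path from $p$ to $q$. Here I use that $\tilde{f}$ is (uniformly) continuous, which is recorded in the Notations and Background section. This composition is continuous as a map into $\UU$, and since its image lies in $\tilde{f}(\UU^n)$, it is continuous as a map into $\tilde{f}(\UU^n)$ with the subspace topology; hence $\tilde{f}(\UU^n)$ is path-connected.

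There is essentially no obstacle here: the only point requiring any care is to be sure that ``path-connected'' is being used consistently (paths valued in the subspace, which is automatic for the subspace topology) and that we genuinely have path-connectedness of the domain rather than merely connectedness. Since the excerpt has already established path-connectedness of $\UU^n$ (and of $(\UU\setminus\u)^n$) using Lemmas in Section~2, and since $\tilde{f}$ is continuous, the argument is complete in two lines. I do not foresee needing the compactness hypothesis at all for this particular lemma; it is inherited simply as part of the standing assumptions of the section.
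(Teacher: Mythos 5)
Your proposal is correct and matches the paper's argument exactly: the paper disposes of this lemma in one line by noting that $\UU^n$ is path-connected and that the continuous image of a path-connected space is path-connected. Your additional remarks about why $\UU^n$ is path-connected and about the subspace topology are fine but not needed.
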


In particular, $\tilde{f}(\UU^n)$ is connected, whence it is a \emph{continuum}.  

\begin{cor}
If $\bar{A}$ is totally disconnected, then $\tilde{f}$ is a constant function.  
\end{cor}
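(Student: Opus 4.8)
The plan is to combine the two preceding lemmas of this section with the structure theorem. By Theorem~\ref{T:many}, since $\tilde{f}(\UU^n)$ is assumed to be a compact subset of $\bar{A}$, we are in case (1), and the previous lemma tells us that $\tilde{f}(\UU^n)$ is path-connected, hence connected. If $\bar{A}$ is totally disconnected, then every connected subset of $\bar{A}$ is a singleton; since $\tilde{f}(\UU^n) \subseteq \bar{A}$ is connected, it must therefore be a single point $\{c\}$. This means $\tilde{f}$ is the constant function with value $c$, and restricting to $\u^n$, so is $f$.

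First I would note that $\tilde{f}(\UU^n)$ is nonempty (it is the image of a nonempty set), so the singleton conclusion is genuine rather than vacuous. Then the main observation is simply that a connected subset of a totally disconnected space has at most one point, which is immediate from the definition of total disconnectedness (the connected components are singletons, and any connected subset lies in a single component). Applying this to the connected set $\tilde{f}(\UU^n) \subseteq \bar{A}$ gives $\tilde{f}(\UU^n) = \{c\}$ for some $c \in \bar{A}$.

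I do not expect any real obstacle here: the corollary is essentially a one-line deduction from the path-connectedness lemma plus the topological triviality about totally disconnected spaces. The only point worth stating carefully is the passage from $\tilde{f}$ being constant on $\UU^n$ to $f$ being constant on $\u^n$, which follows because $f = \tilde{f}\restriction \u^n$ by construction of the natural extension.

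\begin{proof}
By the previous lemma, $\tilde{f}(\UU^n)$ is path-connected, hence connected, and it is clearly nonempty. Since $\tilde{f}(\UU^n)\subseteq \bar{A}$ and $\bar{A}$ is totally disconnected, the only nonempty connected subsets of $\bar{A}$ are singletons; thus $\tilde{f}(\UU^n)=\{c\}$ for some $c\in \bar{A}$. Therefore $\tilde{f}$ is the constant function with value $c$, and since $f=\tilde{f}\restriction \u^n$, so is $f$.
\end{proof}
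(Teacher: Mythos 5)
Your proof is correct and matches the paper's intended argument exactly: the paper leaves this corollary as an immediate consequence of the path-connectedness lemma together with the fact that a nonempty connected subset of a totally disconnected space is a singleton. No gaps.
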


The connectedness of $\tilde{f}(\UU^n)$ also immediately implies the following:

\begin{cor}
If $\tilde{f}$ is not a constant function, then $\tilde{f}(\UU^n)$ is perfect.
\end{cor}

Thus, we see that $|\tilde{f}(\UU^n)|=1$ or $2^{\aleph_0}$.  We now investigate further the properties that $\tilde{f}(\UU^n)$ has as a continuum.

%\begin{lemma}
%$f(\u)$ has no \emph{cut points}, that is, there are no points $y\in f(\u)$ such that $f(\u)\setminus \{y\}$ is disconnected.
%\end{lemma}

%\begin{proof}
%If $y$ were a cut point of $f(\u^n)$, then $f^{-1}(y)$ would separate $\u^n$, contradicting the connectedness of $\u$.  This prevents $f(\u)$ from being homeomorphic to a closed, bounded interval in $\r$.
%\end{proof}

Recall the following two definitions from the theory of continua.  First, a \emph{Peano space} is a continuous image of the unit interval in a hausdorff space.  Second, a continuum $C$ is said to be \emph{irreducible} if there are distinct $x,y\in C$ such that no proper subcontinuum of $C$ contains both $x$ and $y$.  If a continuum is not irreducible, then it is said to be \emph{reducible}.  The next lemma follows immediately from the path-connectedness of $\tilde{f}(\UU^n)$.

\begin{lemma}
Either $\tilde{f}(\UU)$ is a Peano space or a reducible continuum. 
\end{lemma}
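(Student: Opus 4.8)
The plan is to establish the equivalent statement that if the range $\tilde{f}(\UU^n)$ is \emph{not} a reducible continuum, then it must be a Peano space. So suppose $C := \tilde{f}(\UU^n)$ is irreducible, and fix distinct points $x,y\in C$ witnessing this, that is, so that no proper subcontinuum of $C$ contains both $x$ and $y$.

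By the previous lemma, $C$ is path-connected, so there is a continuous path $\gamma\co[0,1]\to C$ with $\gamma(0)=x$ and $\gamma(1)=y$. Then $\gamma([0,1])$ is compact (being a continuous image of a compact space) and connected (being a continuous image of a connected space), and it is a subset of the metric space $\UU$ (which is in particular Hausdorff); thus $\gamma([0,1])$ is a subcontinuum of $C$ containing both $x$ and $y$. By the choice of $x$ and $y$ it cannot be proper, so $\gamma([0,1])=C$. Hence $C$ is a continuous image of the unit interval in a Hausdorff space, i.e. a Peano space, and we are done.

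There is no real obstacle here; the only points requiring a moment's care are that the definition of irreducibility demands \emph{distinct} $x,y$ (so that a one-point range is harmlessly counted as reducible) and that the image of a path in $\UU$ is automatically compact and hence a genuine subcontinuum. Both the path-connectedness of $\tilde{f}(\UU^n)$ and its being a continuum were already recorded above, so nothing further is needed. One could go on to invoke the classical fact that an irreducible Peano continuum is an arc, but this is not required for the stated dichotomy.
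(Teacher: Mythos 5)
Your proof is correct and matches the paper's approach: the paper simply asserts that the lemma ``follows immediately from the path-connectedness of $\tilde{f}(\UU^n)$,'' and your argument (a path joining the two witnessing points of irreducibility has compact connected image, hence is a subcontinuum containing both, hence is all of $C$) is exactly the spelled-out version of that implication.
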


It follows from the previous lemma and Corollaries 3-42 and 3-49 of \cite{HY} that if $\tilde{f}(\UU^n)$ is not a singleton, then it is \emph{decomposable} as a continuum, meaning that it is the union of two proper subcontinua.  Since the generic continuum is indecomposable (as the set of continuum homeomorphic to a certain indecomposable continuum, namely the \emph{pseudo-arc}, is a dense $G_\delta$ subspace of the space of continua), we see that $\tilde{f}(\UU^n)$ is a special kind of continuum.

If $P$ is a property, we say that a continuum $C$ has \emph{arbitrarily small subcontinua with property $P$} if for any $y\in C$ and any $\epsilon>0$, $B_C(y;\epsilon)$ contains a subcontinuum of $C$ with property $P$ other than $\{y\}$.

\begin{lemma}
If $\tilde{f}(\UU^n)$ is not a singleton, then $\tilde{f}(\UU^n)$ has arbirarily small path-connected subcontinua.
\end{lemma}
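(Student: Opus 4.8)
The plan is to exploit the fact that $\tilde f(\UU^n)$ is a continuous image of $\UU^n$ and that $\UU^n$ has arbitrarily small path-connected subsets around every point. More precisely, fix $y \in \tilde f(\UU^n)$ and $\epsilon > 0$; we must produce a path-connected subcontinuum of $\tilde f(\UU^n)$ lying in $B(y;\epsilon)$ and containing $y$ together with some other point. Pick $x \in \UU^n$ with $\tilde f(x) = y$, and let $\delta = \Delta(\epsilon)$ (using the modulus of uniform continuity of $\tilde f$ introduced in the introduction). Then $\tilde f(\bar B_{\UU^n}(x;\delta)) \subseteq \bar B_\UU(y;\epsilon)$, and in fact one should take $\delta$ small enough (say also $\delta < \epsilon_0$, or just replace $\epsilon$ by $\epsilon/2$) so that the image lands strictly inside $B(y;\epsilon)$.

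The key point is then that $\bar B_{\UU^n}(x;\delta)$ is path-connected: a single closed ball in $\UU$ is path-connected (this follows from Lemma~\ref{L:pc} applied with an empty family of removed balls, i.e. finite injectivity of closed balls, together with the cited fact that finite injectivity yields path-connectedness), and a product of path-connected spaces is path-connected, so $\bar B_{\UU^n}(x;\delta)$ — which in the maximum metric is the product of the coordinate balls $\bar B_\UU(x_i;\delta)$ — is path-connected. Hence its continuous image $C := \tilde f(\bar B_{\UU^n}(x;\delta))$ is a path-connected subset of $\tilde f(\UU^n)$ containing $y$. Moreover $C$ is compact, being the continuous image of a closed (hence, within the compact set $\tilde f(\UU^n)$, complete and totally bounded) — more directly, $C$ is the image under the continuous map $\tilde f$ of the closed set $\bar B_{\UU^n}(x;\delta)$, hence is closed in $\UU$ by $\omega_1$-saturation (as noted in the introduction for $\tilde f$ of a definable set) and contained in the compact set $\tilde f(\UU^n)$, so $C$ is a subcontinuum of $\tilde f(\UU^n)$ lying inside $B(y;\epsilon)$.

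It remains only to see that $C \neq \{y\}$ for suitable arbitrarily small $\delta$. Here I would argue that if $C = \{y\}$ for all sufficiently small $\delta$, then $\tilde f$ is locally constant at $x$; doing this at every point and using that $\tilde f(\UU^n)$ is connected (it is path-connected, hence connected) forces $\tilde f$ to be globally constant, contradicting the hypothesis that $\tilde f(\UU^n)$ is not a singleton. Concretely: the set of $x' \in \UU^n$ with $\tilde f(x') = \tilde f(x)$ would be open (by local constancy) and is always closed; by connectedness of $\UU^n$ it would be all of $\UU^n$. So for some arbitrarily small $\delta > 0$ the image $C$ contains a point other than $y$, and that $C$ is the required small path-connected subcontinuum.

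The main obstacle is the clean justification that a single closed ball of $\UU$ is path-connected — one must check that Lemma~\ref{L:pc} (or the finite-injectivity machinery behind it) genuinely applies to a closed ball rather than only to complements of open balls; the safest route is to note that $\bar B_\UU(a;r) = \UU \setminus \bigcup_{r < s \le 1} \{x : d(x,a) > s\}$ is itself of the form $\UU$ minus countably many open balls (take radii $r + 1/k$ around $a$, noting $d \le 1$), so Lemma~\ref{L:pc} applies directly and gives finite injectivity, hence path-connectedness by the argument cited from \cite{Julien}. Everything else is routine modulus-of-continuity bookkeeping and the connectedness argument already used repeatedly in Section~2.
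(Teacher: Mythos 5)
Your overall strategy is the same as the paper's: push a small closed ball $\bar{B}_{\UU^n}(x;\delta)$ around a preimage of $y$ through $\tilde{f}$, observe that its image is a path-connected subcontinuum of $\tilde{f}(\UU^n)$ inside $B(y;\epsilon)$, and note that if this image were always $\{y\}$ then $\tilde{f}^{-1}(y)$ would be clopen, forcing $\tilde{f}$ to be constant. (The paper runs the identical argument contrapositively.) However, both properties of closed balls that the argument rests on are justified incorrectly. Your reduction of the path-connectedness of $\bar{B}_\UU(a;r)$ to Lemma \ref{L:pc} does not work: the sets $\{x : d(x,a)>s\}$ are complements of closed balls, not open balls, and the balls $B_\UU(a;r+1/k)$ you propose to remove all contain $a$ and already $B_\UU(a;r+1)=\UU$ since $d\le 1$, so their complement is empty rather than $\bar{B}_\UU(a;r)$. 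Nor are closed balls finitely injective (a one-point extension prescribing $d(z,a_1)=1$ for $a_1$ in a ball of small radius $r$ cannot be realized inside that ball), so the parenthetical appeal to ``finite injectivity of closed balls'' also fails. The correct route, which the paper uses, is that $\UU$ is a geodesic space: every point of $\bar{B}_\UU(a;r)$ is joined to $a$ by a geodesic segment lying inside the ball, so closed balls are path-connected, and the product in the maximum metric is then path-connected as you say.

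The second gap is the compactness of $C=\tilde{f}(\bar{B}_{\UU^n}(x;\delta))$. The fact you invoke from the introduction --- that images under $\tilde{f}$ are closed by $\omega_1$-saturation --- is stated for \emph{definable} subsets of $\UU^n$, not for arbitrary closed subsets; and a mere subset of the compact set $\tilde{f}(\UU^n)$ need not be closed, while passing to the closure $\bar{C}$ would sacrifice path-connectedness (the closure of a path-connected set need not be path-connected). What is missing is that $\bar{B}_\UU(x_i;\delta)$ is a definable set because $\UU$ is geodesic (Lemma 1.8.15 of \cite{Car}) and that a finite product of definable sets is definable (Lemma 1.10 of \cite{B}); these are exactly the citations the paper supplies at this point. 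Both gaps are repairable, and once repaired your proof coincides with the paper's.
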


\begin{proof}
Suppose that $C:=\tilde{f}(\UU^n)$ doesn't have arbitrarily small path-connected subcontinua.  Then there is $y\in C$ and $\epsilon>0$ such that the only path-connected subcontinuum of $C$ contained in $B_C(y;\epsilon)$ is $\{y\}$.  Now suppose that $x=(x_1,\ldots,x_n)\in \tilde{f}^{-1}(y)$ and $\delta<\Delta(\frac{\epsilon}{2})$.  Then $\tilde{f}(\bar{B}_{\UU^n}(x;\delta))$ is a path-connected subcontinuum of $C$ contained in $B_C(y;\epsilon)$.  Indeed, $\bar{B}_{\UU^n}(x;\delta)$ is a path-connected subset of $\UU$, whence its image under $\tilde{f}$ is path-connected.  Also, as $\UU$ is a geodesic space, $\bar{B}_\UU(x_i;\delta)$ is a definable subset of $\UU$ for each $i\in \{1,\ldots,n\}$ (see Lemma 1.8.15 of \cite{Car}).  Thus, $\bar{B}_{\UU^n}(x;\delta)$ is a cartesian product of definable sets, whence definable by  Lemma 1.10 of \cite{B}.  Thus $\tilde{f}(\bar{B}_{\UU^n}(x;\delta))$ is a closed subset of $C$.  It follows that $\tilde{f}(\bar{B}_{\UU^n}(x;\delta))=\{y\}$ and hence $\tilde{f}^{-1}(y)$ is a clopen subset of $\UU$.  Thus $\tilde{f}(\UU^n)=\{y\}$. 
\end{proof}

It follows that if $\tilde{f}(\UU^n)$ is not a singleton, then $\tilde{f}(\UU^n)$ has arbitrarily small reducible subcontinua.

We end this section with two questions.

\begin{question}\label{L:question1}
Is it possible to improve our structure theorem to show that if $f:\u^n\to\u$ is a definable function, then either $f$ is a projection function or else $f$ is constant?  Said another way, is it impossible to find a non-degenerate continuum $C$ with the properties mentioned above such that there is a definable function $f:\u^n\to\u$ with $\tilde{f}(\UU^n)=C$?
\end{question}

\begin{question}\label{L:question2}
If $f:\u\to\u$ is an injective definable map, must $f=\id_\u$?
\end{question}

A few remarks are in order concerning the latter question.  First, one must note that Theorem \ref{T:many} does not yield an answer to Question \ref{L:question2}.  Indeed, it is possible to have a continuous, injective map $\u\to \u$ with relatively compact image.  Since $\u$ is homeomorphic to $\ell^2$, it suffices to construct a continuous, injective map $\ell^2\to\ell^2$ with relatively compact image.  Here is an example of such a function, as was communicated to me by Jan van Mill.  By Theorem 6.6.11 of \cite{Mill}, $\ell^2$ is homeomorphic to $\r^{\infty}$, which is in turn homeomorphic to $(0,1)^\infty$.  This latter space is homeomorphic to a subspace of the \emph{compact} Hilbert cube $[0,1]^\infty$, which itself is homeomorphic to a subspace of $\ell^2$ via the map $(x_n)\mapsto (\frac{x_n}{2^n})$.  The composition of these maps yields the desired example.  

However, Theorem \ref{T:many} yields a positive answer to Question \ref{L:question2} in the case that the parameterset $A$ defining $f$ is totally disconnected.  In this case, the answer to Question \ref{L:question1} is affirmative and an affirmative answer to Question \ref{L:question1} yields an affirmative answer to Question \ref{L:question2}.  It follows from this that there are no $A$-definable injective maps $f:\u^n\to\u$ for $n\geq 2$ and $A$ totally disconnected.  The aforementioned result generalizes Corollary 5.16(2) of \cite{Gold}, which has the stronger assumption that $A$ is finite.

In connection with Question \ref{L:question2}, one can prove the following:

\begin{prop}\label{T:inj}
If $\tilde{f}:\UU\to\UU$ is injective, then $\tilde{f}=\id_\UU$.
\end{prop}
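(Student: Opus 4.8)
The plan is to reduce, via the one-variable case of Theorem~\ref{T:many}, to a single troublesome situation and then contradict injectivity using quantifier elimination together with the identity $\dcl(A)=\bar A$. Throughout, let $f\co\u\to\u$ be the $A$-definable function whose natural extension is $\tilde f$, and suppose $\tilde f$ is injective.

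First I would invoke Theorem~\ref{T:many} with $n=1$: since the only projection map $\UU\to\UU$ is $\id_\UU$, either $\tilde f=\id_\UU$, in which case we are done, or else $\tilde f(\UU)\subseteq\bar A$. So assume the latter; I will derive a contradiction. The key point is that on $\bar A$ the type over $A$ separates points: if $b,b'\in\bar A$ satisfy $d(b,a)=d(b',a)$ for all $a\in A$, then, writing $b$ as the limit of a sequence $(a_k)$ from $A$, we get $d(b',a_k)=d(b,a_k)\to 0$, so $b'$ is the limit of the same sequence and $b=b'$. On the other hand, because $\tilde f$ is $A$-definable, for each $a\in A$ the predicate $x\mapsto d(\tilde f(x),a)$ is a uniform limit of $A$-formulas in the single variable $x$, so its value depends only on $\tp(x/A)$. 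Hence if $x_1,x_2\in\UU$ realize the same type over $A$, then $d(\tilde f(x_1),a)=d(\tilde f(x_2),a)$ for every $a\in A$, and therefore, since $\tilde f(x_1),\tilde f(x_2)\in\tilde f(\UU)\subseteq\bar A$, the previous observation forces $\tilde f(x_1)=\tilde f(x_2)$.

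It therefore remains to produce two \emph{distinct} points of $\UU$ with the same type over $A$. Let $A_0=\{a_i\mid i<\omega\}$ be a countable dense subset of $\bar A$. Since the diameter of $\UU$ is $1$, attaching to any finite subset of $\UU$ a new point at distance exactly $1$ from each of its members is a legitimate one-point metric extension, so by finite injectivity of $\UU$ together with $\omega_1$-saturation the partial type $\{d(x,a_i)=1\mid i<\omega\}$ is realized by some $x_1\in\UU$, and then $\{d(x,a_i)=1\mid i<\omega\}\cup\{d(x,x_1)=1\}$ is realized by some $x_2\in\UU$; note $x_1\ne x_2$ because $d(x_1,x_2)=1$. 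As $x_1$ and $x_2$ have the same (namely, $1$) distances to every member of $A_0$, and hence by continuity to every member of $\bar A\supseteq A$, quantifier elimination for $\operatorname{Th}(\u)$ gives $\tp(x_1/A)=\tp(x_2/A)$. Combined with the previous paragraph, $\tilde f(x_1)=\tilde f(x_2)$ while $x_1\ne x_2$, contradicting injectivity; hence $\tilde f=\id_\UU$. The crux of the argument is really the observation that $\dcl(A)=\bar A$ makes the type over $A$ separate points of $\tilde f(\UU)$; by comparison the construction of $x_1,x_2$ is routine, and I do not anticipate a genuine obstacle beyond assembling these pieces.
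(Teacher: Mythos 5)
Your proof is correct, but it takes a genuinely different route from the paper's. The paper first shows that the complement of any open ball in $\UU$ is a definable set (via an explicit formula $d(x,y)\dotplus(r\dotminus d(y,e))$), deduces that the injective map $\tilde{f}$ sends these closed definable sets to closed sets and is therefore a closed map, hence a topological embedding, and then rules out the compact-range alternative of Theorem \ref{T:many} because an embedding of the non-compact space $\UU$ cannot have compact image. You instead stay entirely inside the compact-range case and argue that there $\tilde{f}$ factors through the type space over $A$: since $d(\tilde{f}(x),a)$ is an $A$-definable predicate for each $a\in A$ and distances to $A$ separate points of $\bar{A}$, the value $\tilde{f}(x)\in\bar{A}$ depends only on $\tp(x/A)$; you then kill injectivity by exhibiting two distinct realizations of the same type over $A$ (points at distance $1$ from $\bar{A}$ and from each other, obtained by finite injectivity, $\omega_1$-saturation, and quantifier elimination). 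Each step of yours checks out: the one-point extensions you use satisfy the triangle inequality because the diameter is bounded by $1$, and quantifier elimination does reduce a $1$-type over $A$ to the distance data. What the paper's argument buys is the stronger intermediate conclusion that $\tilde{f}$ is a topological embedding, together with the independently useful fact that complements of balls are definable; what yours buys is a more elementary and self-contained contradiction that isolates the real obstruction --- a function into $\bar{A}$ is controlled by types over the countable set $A$, and $\UU$ has far too many points per type for such a function to be injective. Your approach is in spirit close to the paper's second, rank-theoretic remark (compact image forces the fibers to be large), but avoids the $U^{\th}$-rank machinery entirely.
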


\begin{proof}
One first observes that if $C$ is the complement of an open ball $B_\UU(e;r)$ in $\UU$, then $C$ is definable over $\{e\}$.  This fact has already been observed by Carlisle and Point, but since it has yet to appear in the literature, we provide a proof here.  Set $\phi(x,y):=d(x,y)\dotplus (r\dotminus d(y,e)).$  (Here, $a\dotplus b=\min(a+b,1)$ and $a\dotminus b=\max(a-b,0)$.)  Observe that $\phi(x,y)=d(x,y)$ for $y\in C$.  We next claim that $\phi(x,y)\geq d(x,C)$ for $y\notin C$.  If we are successful in proving this, then it follows that $d(x,C)=\inf_y\phi(x,y)$, proving that $C$ is $\{e\}$-definable.  Suppose $r_0:=d(e,y)<r$.  Let $\Phi:[0,r]\to \UU$ be an isometry such that $\Phi(0)=e$ and $\Phi(r_0)=y$.  Set $y':=\Phi(r)\in C$.  Then $d(x,C)\leq d(x,y')\leq d(x,y)+d(y,y')=d(x,y)+(r-d(y,e))=\phi(x,y)$.  (One should note that the definability of $C$ also follows from a very recent result of Melleray which shows that a closed subset $C$ of $\UU$ is definable over a countable $A\subseteq \UU$ if and only if $C$ is invariant under all automorphisms of $\UU$ which fix $A$.)  

Again suppose that $C$ is the complement of an open ball in $\UU$.  Then the preceding paragraph implies that $\tilde{f}(C)$ is a closed subset of $\UU$.  Since $\tilde{f}$ is injective, this implies that $\tilde{f}$ is a closed map, whence it is a topological embedding.  It follows from Theorem \ref{T:many} that $\tilde{f}=\id_\UU$.
\end{proof}

A quicker, but less elementary, way of proving the preceding proposition would be to use the machinery of $U^\th$-rank for continuous rosy theories as developed in \cite{Gold}.  Indeed, if $\tilde{f}$ is injective, then $U^{\th}_{\operatorname{real}}(\UU)=U^{\th}_{\operatorname{real}}(\tilde{f}(\UU)).$  It is shown in \cite{Gold} that $U^{\th}_{\operatorname{real}}(\UU)=1$.  This prevents $\tilde{f}(\UU)$ from being compact, for then $U^{\th}_{\operatorname{real}}(\tilde{f}(\UU))=0$.

Unfortunately, Proposition \ref{T:inj} does not settle the answer to Question \ref{L:question2}.  Indeed, since continuous logic is a positive logic, one cannot show that $\tilde{f}$ is injective given that $f$ is injective.  (This issue is discussed in Section 2 of \cite{Gold}.)

\end{document}